\date{Jan 21, 2019}
\title{The Ramsey Number of Fano Plane \\ Versus Tight Path}
\author{ \qquad J\'ozsef Balogh \thanks{and Moscow Institute of Physics and Technology, 9 Institutskiy per., Dolgoprodny, Moscow Region,141701, Russian Federation. Research is partially supported by NSF Grant DMS-1500121, Arnold O. Beckman Research
Award (UIUC Campus Research Board RB 18132) and the Langan Scholar Fund (UIUC).} \qquad Felix Christian Clemen \qquad  \\
\small Department of Mathematics\\[-0.8ex]
\small University of Illinois at Urbana-Champaign\\[-0.8ex] 
\small  Urbana, Illinois 61801, U.S.A.\\
\small\tt jobal@math.uiuc.edu \qquad fclemen2@illinois.edu 
\and Jozef Skokan\\
\small Department of Mathematics\\[-0.8ex]
\small London School of Economics\\[-0.8ex]
\small London, U.K.\\
\small\tt j.skokan@lse.ac.uk\\
\and Adam Zsolt Wagner \thanks{Research has been partially performed while at the University of Illinois at Urbana-Champaign.}\\
\small Department of Mathematics\\[-0.8ex]
\small ETH\\[-0.8ex]
\small Z\"urich, Switzerland \\
\small\tt zsolt.wagner@math.ethz.ch 
  }
\def\Ci{C_n^{(3)}}
\def\FF{\mathbb{F}}
\def\Ti{P_n^t}
\def\HH{\mathcal{H}}
\def\eps{\varepsilon}
\newtheorem{theorem}{Theorem}
\newtheorem{lemma}{Lemma}
\newtheorem{definition}{Definition}
\begin{document}

 \maketitle

\begin{abstract}
The hypergraph Ramsey number of two $3$-uniform hypergraphs $G$ and $H$, denoted by $R(G,H)$, is the least integer~$N$ such that every red-blue edge-coloring of the complete $3$-uniform hypergraph on $N$ vertices contains a red copy of $G$ or a blue copy of $H$. 

The Fano plane $\FF$ is the unique 3-uniform hypergraph with seven edges on seven vertices in which every pair of vertices is contained in a unique edge. There is a simple construction showing that $R(H,\FF) \ge 2(v(H)-1) + 1.$  Hypergraphs $H$ for which the equality holds are called $\FF$-good. Conlon asked to determine all $H$ that are $\FF$-good.

In this short paper we make progress on this problem and prove that the tight path of length $n$ is $\FF$-good.
\end{abstract}

\thispagestyle{empty}
\section{Introduction}
Ramsey theory is one of the most intensively studied topics in combinatorics. Given two hypergraphs $G$ and $H$, $R(G,H)$ denotes the hypergraph Ramsey number of $G$ and $H$. That is, $R(G,H)$ is the least integer such that any red-blue edge-coloring of the complete 3-uniform hypergraph on that many vertices contains a red $G$ or a blue $H$ as a subhypergraph. The existence of $R(G,H)$ is guaranteed by Ramsey's theorem \cite{Ramsey}. However, estimating or even determining these parameters is often a difficult problem. 

In this short paper we will determine exactly the Ramsey number of the tight path and the Fano plane. This problem is the first progress on a question asked at AIMS workshop on hypergraph Ramsey problems in 2015 by Conlon~\cite{conlonques}. He defined a graph $G$ to be $t$-good if $$R(G,K_t)=(t-1)(v(G)-1)+1,$$ and more generally defined a (hyper)graph $G$ to be $H$-good if 
\begin{equation}
 R(G,H)=(\chi(H)-1)(v(G)-1)+\sigma(H) \label{H-good}
\end{equation}
where $\sigma(H)$ is the size of the smallest color class in any $\chi(H)$-coloring of $H$. His intuition behind these definitions was that $H$-good graphs tend to be poor expanders. Denoting the Fano plane by $\FF$, he asked which hypergraphs are $\FF$-good.

Our result belongs to the rare family of problems where an exact bound is found. The origin of these questions are graph Ramsey problems (Bondy and Erd\H{o}s \cite{Erdos}). Recently, the Ramsey number of the cycle and the clique (Keevash, Long and Skokan \cite{Keevash}), and the Ramsey number of the clique and the hypercube (Griffiths, Morris, Fiz Pontiveros, Saxton, Skokan \cite{Pontiveros}) have been determined. For similar results we refer the interested reader to the excellent recent survey~\cite{ramseysurvey}.

Denote by $\FF$ the Fano plane, i.e.\ the unique $3$-uniform hypergraph with seven edges on seven vertices in which every pair of vertices is contained in a unique edge. In this paper all hypergraphs are $3$-uniform. Let $\Ti$ be the tight path on $n$ vertices, i.e.\ it contains distinct vertices $v_1,v_2,\ldots,v_n$ and edges $e_1,e_2,\ldots,e_{n-2}$ where $e_i=\{v_i,v_{i+1},v_{i+2}\}$. 
\begin{theorem} \label{mainthm}
 There exists $n_0\in \mathbb{N}$ such that for any $n\geq n_0$, 
  we have $R(\Ti,\FF )=2n-1$. 
\end{theorem}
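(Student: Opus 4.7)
The lower bound $R(\Ti,\FF) \geq 2n-1$ is given by the construction described in the introduction. For the upper bound, I fix a red/blue edge-coloring of $K_N^{(3)}$ with $N=2n-1$ that contains no blue copy of $\FF$, and aim to find a red copy of $\Ti$. The plan combines two external tools: (a)~the de Caen--F\"uredi Tur\'an theorem for $\FF$ together with its stability version (Keevash--Sudakov, F\"uredi--Simonovits), which says that any Fano-free 3-graph on $N$ vertices has at most $\binom{N}{3} - \binom{\lceil N/2 \rceil}{3} - \binom{\lfloor N/2 \rfloor}{3}$ edges and, if close to this bound, is $o(N^3)$-close to the canonical bipartite-complement construction; and (b)~the R\"odl--Ruci\'nski--Szemer\'edi Dirac-type theorem, which produces a tight Hamilton cycle in any 3-graph on $m$ vertices with minimum codegree at least $(1/2+o(1))m$.

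Armed with these, I split on how close the blue hypergraph $B$ is to the Fano-extremal construction. \textbf{Case I (stability).} If $|B|$ is within $o(n^3)$ of the extremal value, stability produces a partition $V = U_1 \cup U_2$ with $||U_i| - n| = o(n)$ such that all but $o(n^3)$ edges lying entirely within some $U_i$ are red. Assuming $|U_1| \geq n$, I remove the $o(n)$ ``defective'' vertices of $U_1$ (those with abnormally high blue codegree inside $U_1$) and, if necessary, swap in vertices of $U_2$ with low blue codegree to $U_1$, obtaining a set $W$ of exactly $n$ vertices on which the red hypergraph has minimum codegree at least $(1-\eta)n$. Applying R\"odl--Ruci\'nski--Szemer\'edi on $W$ delivers a tight Hamilton cycle and hence a red $\Ti$. \textbf{Case II (non-stability).} If $|B|$ is smaller than the extremal value by $\Omega(n^3)$, then the red hypergraph has density at least $1/4+\gamma$ for some absolute $\gamma > 0$. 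Here I would apply weak hypergraph regularity to red, use that the reduced blue hypergraph is still essentially Fano-free (so the reduced red hypergraph has density at least $1/4 + \gamma/2$ and is far from the bipartite-complement structure), and extract a tight-path-connected matching of size at least $(1-o(1))n$ in the reduced red hypergraph; lifting via the regularity decomposition yields a red $\Ti$.

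\textbf{Main obstacle.} I expect the crux to lie in the clean-up step in Case~I: producing a set $W$ of exactly $n$ vertices on which red is near-complete. The swap between $U_1$ and $U_2$ must simultaneously eliminate defective vertices and preserve high red codegree for every surviving pair, which demands a careful double count using the quantitative stability bounds. Case~II is the secondary obstacle: the density bound $1/4+\gamma$ alone does not force a tight path of length $n$ in general, so one must genuinely leverage the no-blue-$\FF$ hypothesis via the reduced hypergraph to produce the required connected-matching structure.
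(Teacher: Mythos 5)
Your proposal takes a genuinely different route from the paper (which never invokes the Tur\'an or stability theorems for $\FF$, but instead extracts many disjoint red cliques via off-diagonal Ramsey bounds, links them by ``butterflies'', and shows the resulting blob graph decomposes into two or three paths), but as written it has two real gaps. The first is in your Case I clean-up, which you correctly flag as the crux but propose to resolve by ``a careful double count using the quantitative stability bounds''; this cannot work in principle. Stability only bounds the number of blue triples inside $U_1$ by $o(n^3)$, so after deleting the $o(n)$ vertices of large blue degree there can still remain a pair $\{u,v\}$ of surviving vertices whose blue codegree inside $U_1$ equals $|U_1|-2$: such a pair contributes only $O(n)$ blue triples and is invisible to any counting argument at the $o(n^3)$ scale, yet it destroys the minimum-codegree hypothesis of R\"odl--Ruci\'nski--Szemer\'edi. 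The repair must re-invoke the no-blue-$\FF$ hypothesis at this stage, as the paper does in Lemma~\ref{decomp 2}: a single blue triple $abc$ inside the nearly-red part, together with the fact that $a,b,c$ have almost entirely blue links into the other part, already yields a blue Fano plane, which upgrades ``almost all red'' to ``entirely red''. The swap step has the same problem: a typical vertex of $U_2$ has essentially \emph{no} red codegree into $U_1$ in the canonical structure, so moving it across requires an argument (compare the paper's Lemma~\ref{adding junk}, which again uses a blue Fano to show each junk vertex has almost-red links into one of the two sides).

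The second and larger gap is Case II, which is a hope rather than a proof. Red density $1/4+\gamma$ together with ``far from the bipartite-complement structure'' does not by itself produce a tight-connected matching covering more than half the vertex set, and the lifting of connected matchings to \emph{tight} paths through weak regularity is itself a delicate matter; neither step is specified. More importantly, the structure that actually arises in this regime --- as the paper's three-paths case reveals --- is a partition into three parts of size roughly $2n/3$, each internally red, with the red crossing triples oriented cyclically between the parts (Lemmas~\ref{claimbluetri2}, \ref{claimbluetri}, \ref{equal size}--\ref{structure3}). Extracting this cyclic orientation from Fano-freeness is done in the paper by explicit constructions of blue Fano planes from directed $K_{4,4}$'s and blowups of transitive triangles, and then the red $\Ti$ is built by interleaving two consecutive parts. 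Your plan contains no mechanism for discovering this three-part cyclic structure, and it is precisely the combinatorial content that a density-plus-regularity argument does not supply. So the two-regime dichotomy is sensible, but the essential work --- converting ``no blue Fano plane'' into usable red structure beyond edge counts --- remains to be done in both cases.
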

The lower bound $R(\Ti,\FF )\geq 2n-1$ follows easily by the following folklore construction. Write the vertex set of the complete $3$-uniform hypergraph on $2n-2$ vertices $K_{2n-2}^{(3)}$ as the disjoint union of two sets $A$ and $B$ with $|A|=|B|=n-1$. Color all $3$-edges that are fully contained in either $A$ or in $B$ red, and all other edges blue. Observe that as $|A|,|B|<n$ there is no red $\Ti$ in this coloring. Since the chromatic number $\chi(\FF)=3$, in every copy of $\FF$ there is one edge that is fully contained in either $A$ or $B$, hence this coloring cannot contain a blue copy of $\FF$ either. This establishes that $R(\Ti,\FF)>2n-2$. The main contribution of our work is to establish the upper bound $R(\Ti,\FF )\leq 2n-1$. 

Let $\Ci$ be the tight cycle on $n$ vertices, i.e.\ it contains distinct vertices $v_1,v_2,\ldots,v_n$ and edges $e_1,e_2,\ldots,e_n$ with $e_i=\{v_i,v_{i+1},v_{i+2}\}$ where $v_{n+1}:=v_1$ and $v_{n+2}:=v_2$. 
Theorem~\ref{mainthm} also holds when one replaces $\Ti$ with $\Ci$.

\begin{theorem} \label{mainthm2}
There exists $n_0\in \mathbb{N}$ such that for any $n\geq n_0$, we have $R(\Ci,\FF )=2n-1$. 
\end{theorem}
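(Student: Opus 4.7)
For the lower bound, observe that $\Ti$ is a subhypergraph of $\Ci$, so any coloring containing a red $\Ci$ contains a red $\Ti$. Consequently $R(\Ci,\FF)\ge R(\Ti,\FF)=2n-1$ by Theorem~\ref{mainthm}. The folklore construction on $2n-2$ vertices (partitioned as $A\cup B$ with $|A|=|B|=n-1$, red inside each part) also witnesses this bound directly: each monochromatic clique has fewer than $n$ vertices and hence cannot host $\Ci$, while the $3$-chromaticity of $\FF$ rules out a blue Fano plane as before.

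For the upper bound, the plan is to reduce to Theorem~\ref{mainthm} and then promote the red tight path to a red tight cycle of the same length. Fix any red/blue coloring of $K_{2n-1}^{(3)}$ with no blue Fano plane. By Theorem~\ref{mainthm} there is a red tight path $P=v_1v_2\cdots v_n$. Let $U=V\setminus V(P)$, so $|U|=n-1$. To close $P$ into a red tight cycle it suffices that the two closing edges $\{v_{n-1},v_n,v_1\}$ and $\{v_n,v_1,v_2\}$ are red. If they are, we are done; otherwise we would exploit the $n-1$ vertices in $U$ via tight-path rotations. Concretely, whenever $\{v_{n-2},v_{n-1},w\}$ is red for some $w\in U$, the sequence $v_1\cdots v_{n-1}w$ is another red tight path on $n$ vertices with $w$ as a new endpoint; symmetric rotations at $v_1$ enlarge the candidate set of endpoint pairs. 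I would then argue that among all rotated endpoint pairs at least one closes, for otherwise the accumulated blue triples around the endpoints, combined with the pair-links of $v_1$ and $v_n$, contain a blue Fano plane.

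The main obstacle is that the rotation-and-closing step is delicate in the 3-uniform tight setting: an endpoint of a tight path is determined by an \emph{ordered pair} of vertices, so every rotation must preserve a red pair-link, and closure requires \emph{two} red triples to cooperate simultaneously. I would handle this via a stability dichotomy that parallels the structure of the proof of Theorem~\ref{mainthm}. In the extremal case the red hypergraph is close to a disjoint union of two cliques of order roughly $n-1$; since the host has $2n-1$ vertices, at least one of these cliques spans $\ge n$ vertices, and that part contains a red $K_n^{(3)}\supseteq \Ci$ directly. In the non-extremal case the arguments used to prove Theorem~\ref{mainthm} should already provide substantial flexibility around $P$, namely many red triples linking $\{v_1,v_2\}$ and $\{v_{n-1},v_n\}$ to $U$, and a short case analysis then either produces a rotation that closes into $\Ci$ or assembles enough blue triples on seven vertices to realise a blue Fano plane, contradicting our hypothesis. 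The delicate bookkeeping inside this non-extremal case is the step I expect to be the most technical.
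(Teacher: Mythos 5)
Your lower bound is correct (both the monotonicity $R(\Ci,\FF)\ge R(\Ti,\FF)$ and the direct two-clique construction work), but the upper bound has a genuine gap at the closure step, and the route you choose is not the one the paper intends. The paper does not derive Theorem~\ref{mainthm2} by black-boxing Theorem~\ref{mainthm}; it reruns the entire proof and, in each terminal embedding lemma, arranges for the tight path to start and end inside a set on which \emph{all} triples are red (the entirely red blocks $A'$, $P_1''$, etc.\ produced by Lemmas~\ref{decomp 2} and \ref{entirely red}), so that the two wrap-around edges are red for free. That is exactly the mechanism in your ``extremal case,'' and it is in fact the mechanism in every case --- which is why the paper calls the modifications technicalities.

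Your proposed non-extremal argument does not close. With only $2n-1$ vertices, Theorem~\ref{mainthm} hands you a red tight path on exactly $n$ vertices with no control over which one, and the claim that if every rotated endpoint pair fails to close then ``the accumulated blue triples around the endpoints \dots contain a blue Fano plane'' is unsubstantiated and false at the level of generality at which you invoke it: a family of blue triples all containing the fixed pair $\{v_1,v_2\}$, or even all containing the single vertex $v_1$, can never contain a copy of $\FF$, because in the Fano plane every vertex lies in exactly three of the seven edges, so four edges must avoid $v_1$ altogether. Converting ``all closures fail'' into seven blue triples with the incidence structure of $\FF$ requires precisely the global blue structure (butterflies, Lemmas~\ref{claimbluetri2} and \ref{claimbluetri}, the blob decomposition) that the black-box reduction discards. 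Your fallback --- that in the non-extremal case ``the arguments used to prove Theorem~\ref{mainthm} should already provide substantial flexibility'' --- is an acknowledgement that the proof of Theorem~\ref{mainthm} must be reopened; once you do that, the cleanest fix is not rotation at all but choosing the first two and last two vertices of the constructed path inside the same entirely red block (plus a minimum-degree adjustment in the setting of Lemma~\ref{structure3}), which is the argument the paper omits.
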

We choose not to present the proof of Theorem~\ref{mainthm2} since the proof is almost the same as the proof of Theorem~\ref{mainthm} and differs only in some technicalities which do not give the reader more insight into the methods used.

Theorem~\ref{mainthm} claims that $\Ti$ is $\FF$-good. We might reverse the question and ask which hypergraphs $H$ are $\Ti$-nice. Here we say that $H$ is $G$-nice, if  \eqref{H-good} holds.

 The organization of the paper is the following. In Section 2 we will show that Theorem~\ref{mainthm} is sharp in the sense that the Ramsey number increases when one adds enough edges to the tight path. In Section 3 we will give some definitions and basic tools which will be needed for the proof. The proof itself will be given in Section 4. It will consist of two cases, which will be handled separately. 


\section{Sharpness example}
The following example shows that Theorem~\ref{mainthm} is best possible in the following sense: Let $P'$ be the $3$-uniform hypergraph obtained from the tight path $\Ti$ by adding three edges $\{v_2, v_3, v_6\}$,  $\{v_1, v_2, v_5\}$ and $\{v_1, v_4, v_6\}$. We claim that Ramsey number of $P'$ and $\FF$ is bigger than the Ramsey number of $\Ti$ and $\FF$.

\begin{lemma}
$R(P',\FF )> 2n$
\end{lemma}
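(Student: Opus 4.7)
To show $R(P',\FF) > 2n$, the plan is to exhibit a red-blue edge-colouring of $K_{2n}^{(3)}$ with no red copy of $P'$ and no blue copy of $\FF$. I would start from the standard construction showing $R(\Ti,\FF) \ge 2n - 1$: partition the vertex set as $V = A \cup B$ with $|A| = |B| = n - 1$, colour all in-part edges red and all crossing edges blue. This colouring on $2n - 2$ vertices has no blue $\FF$ (since $\chi(\FF) = 3$ forces any $\FF$ to contain a monochromatic, hence red, in-part edge) and no red $\Ti$ (each part is too small). I would then enlarge the vertex set to $2n$ by adding two new vertices $x, y$ and carefully colouring the edges involving them.

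The guiding idea is to treat $x$ as an honorary member of $A$ and $y$ as an honorary member of $B$, so that the enlarged partition $(A \cup \{x\}, B \cup \{y\})$ still underlies the chromatic-number argument precluding blue $\FF$. By default, edges $\{x,a,a'\}$ with $a,a'\in A$ would be red (and symmetrically $\{y,b,b'\}$), while all crossings between the two enlarged parts (including $\{x,b_1,b_2\}$, $\{x,a,b\}$ and $\{x,y,\cdot\}$) would be blue. Under this default, however, $A\cup\{x\}$ is a complete red $3$-graph of order $n$ and hence contains $P'$. To defeat this, I would flip a small, carefully chosen set of in-part triples through $x$ to blue, and similarly for $y$.

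The key structural input is that the three extra edges of $P'$, namely $\{v_1,v_2,v_5\}$, $\{v_2,v_3,v_6\}$ and $\{v_1,v_4,v_6\}$, all involve only the first six vertices of the tight path. One can exploit this concentration by choosing the flip-set through $x$ so that $x$ is forced into one of the first six positions of any red Hamilton tight path inside $A\cup\{x\}$, and by then making the relevant $P'$-extras through $x$ blue by design; symmetric rules apply to $y$. Verification then proceeds in two parts: (1) no blue $\FF$, by a case analysis on how the seven Fano vertices split between $A\cup\{x\}$ and $B\cup\{y\}$ (using $\chi(\FF)=3$), together with the observation that the very small number of newly-blue in-part triples cannot combine with the blue crossings to complete a copy of $\FF$; (2) no red $P'$, by showing that since any red $P'$ must lie entirely within one enlarged part (crossings being blue), and since $x$ (resp.\ $y$) is forced into one of the first six tight-path positions, at least one of the three extra edges of $P'$ is blue.

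The main obstacle is designing the flip-set to achieve three competing requirements simultaneously: (a) blocking every embedding of $P'$ within each enlarged part, (b) preserving at least one embedding of $\Ti$ in the red hypergraph (which must exist on $2n$ vertices whenever there is no blue $\FF$, since $R(\Ti,\FF)=2n-1$ by Theorem~\ref{mainthm}), and (c) not introducing any blue $\FF$ through the new in-part blue edges. The tight localisation of the $P'$-extras within the first six vertices of the tight path is precisely the feature that makes such a balance possible; the concrete verification, however, is a combinatorial case analysis of how $x$ and $y$ may appear in red tight paths and how seven Fano vertices may distribute across the two enlarged parts.
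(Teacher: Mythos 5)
Your approach has a fatal flaw at your own requirement (c), and it cannot be repaired within the bipartite framework you set up. Suppose you flip even a single in-part triple $\{x,a_1,a_2\}$ (with $a_1,a_2\in A$) to blue while keeping all crossing triples between $A\cup\{x\}$ and $B\cup\{y\}$ blue. Pick any four vertices $b_1,b_2,b_3,b_4\in B$. Then the seven triples $\{x,a_1,a_2\}$, $\{x,b_1,b_2\}$, $\{x,b_3,b_4\}$, $\{a_1,b_1,b_3\}$, $\{a_1,b_2,b_4\}$, $\{a_2,b_1,b_4\}$, $\{a_2,b_2,b_3\}$ cover every pair of these seven vertices exactly once, so they form a copy of $\FF$; the first is blue because you flipped it and the other six are blue crossing edges. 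This is exactly the "line plus complementary quadrilateral" structure of the Fano plane, and it is why the bipartite construction is rigid: you cannot recolour any in-part triple blue without creating a blue $\FF$. Your chromatic-number argument only guarantees that some Fano edge is monochromatic with respect to the partition; once such a monochromatic edge can itself be blue, the argument gives nothing.

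The difficulty is compounded on the red side: since $A$ is a complete red $3$-graph on $n-1$ vertices and $P'$ is not symmetric (its three extra edges sit only at the start of the tight path), $x$ can occupy the \emph{last} position of a tight path, where it lies in a single path-edge and in none of the extra edges. Hence a red $P'$ in $A\cup\{x\}$ exists as soon as one triple $\{x,a,a'\}$ is red, so you would have to flip \emph{all} triples through $x$ -- each of which, as above, creates a blue $\FF$. The paper avoids this dead end by abandoning the bipartite structure altogether: it takes three parts $A,B,C$ of size $2n/3$ and colours red the triples with two vertices in one part and the third in that part or in the cyclically next part. This keeps each part well below $n$, forces any red tight path to alternate between two consecutive parts in one of a few explicit patterns on its first six vertices, and a direct check shows one of the three extra edges of $P'$ then has the blue pattern $BBA$. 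You would need to switch to a construction of this tripartite, cyclic type for the lemma to go through.
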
 

\begin{proof}
Assume, for simplicity, that $n$ is divisible by 3. Take three sets $A$, $B$, $C$ of size $2n/3$ each. Color by red all triples of vertices $\{x,y,z\}$ such that either $x,y\in A$ and $z\in A\cup B$, or $x,y\in B$ and $z\in B\cup C$, or $x,y\in C$ and $z\in A\cup C$. All other triples are colored blue. A short case analysis shows there is no blue Fano plane $\FF$. Let us assume there is an embedding of a red $P'$. The first three vertices of such an embedding cannot come from different sets $A,B$ and $C$. Without loss of generality, let $A$ be the set which contains at least two of them. The only way to embed a red copy of $\Ti$ is to use all vertices of $A$ and $n/3$ vertices of $B$. Since between 2 vertices from $B$ there has to be at least 2 vertices from $A$, the only way for an embedding of $\Ti$ to start is with the first 6 vertices having the following patterns: $AABAAB$, $ABAABA$, $ABAAAB$, $BAABAA$, $BAAABA$ or $BAAAAB$. However, regardless of which pattern we use, the resulting red tight path cannot be extended to a red copy of $P'$: one of $\{v_2, v_3, v_6\}$,  $\{v_1, v_2, v_5\}$ and $\{v_1, v_4, v_6\}$ would be of the form $BBA$ and therefore blue.
\end{proof}


\section{Preparations}
Our starting point in the proof of Theorem~\ref{mainthm} will be an upper bound on the off-diagonal hypergraph Ramsey numbers. We choose to use an upper bound from~\cite{ConFoxSud}, but any weaker bound would suffice.

\begin{theorem}[Conlon--Fox--Sudakov \cite{ConFoxSud}] \label{Suda}
There exists $C>0$ so that for every integer $s\geq 4$ and sufficiently large $t$,  
$$R(K_s^{(3)},K_t^{(3)})\leq 2^{Ct^{s-2}\log t}.$$
\end{theorem}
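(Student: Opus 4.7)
I would proceed by induction on $s \geq 4$, writing $R_3(s,t) := R(K_s^{(3)}, K_t^{(3)})$. The starting point is the classical Erd\H{o}s--Rado \emph{link argument}: given a 2-coloring of $K_N^{(3)}$, pick a vertex $v$, view its link as a 2-coloring of $K_{N-1}$, and apply graph Ramsey to find either a red clique of size $R_3(s-1,t)$ or a blue clique of size $R_3(s,t-1)$ inside the link; recursing on the 3-uniform coloring restricted to the monochromatic clique then yields the desired red $K_s^{(3)}$ or blue $K_t^{(3)}$. This delivers the recursion
\[
R_3(s,t) \leq R\bigl(R_3(s-1,t),\, R_3(s,t-1)\bigr) + 1.
\]
Unrolled naively, with Erd\H{o}s--Szekeres $R(a,b) \leq b^{a-1}$, one sees $\log R_3(4,t) \leq (t-1)\log R_3(4,t-1)$, giving a two-level tower $R_3(4,t) \leq 2^{2^{O(t\log t)}}$, and more generally a tower of height $s-2$ for $R_3(s,t)$. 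The task is therefore to save $s-3$ levels of this tower and collapse everything to the single-exponential $2^{Ct^{s-2}\log t}$.

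\textbf{The key refinement.} The Conlon--Fox--Sudakov trick, which I would follow, replaces the costly ``extract a monochromatic clique from the link'' step by a milder ``extract a dense weighted sub-link'' step. Concretely, I would iteratively build a sequence of vertices $v_1, v_2, \ldots$ together with nested candidate sets $W_0 \supset W_1 \supset \cdots$, maintaining an auxiliary coloring on pairs $(v_i,v_j)$ defined by the majority color of $\chi(v_i v_j w)$ over $w \in W_{\max(i,j)}$. At each step we do not pass to a monochromatic subset of the link of $v_{i+1}$ (which would cost an Erd\H{o}s--Szekeres-size factor) but only to a red-majority subset of controlled density; this loses merely a polynomial-in-$t$ factor in $|W_i|$ per step rather than an exponential one. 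The iteration runs until the accumulated auxiliary coloring contains a long monochromatic structure, at which point one extracts either a red $K_s^{(3)}$ or applies the inductive hypothesis on a derived 3-uniform coloring to produce a blue $K_t^{(3)}$.

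\textbf{Counting out the exponent.} For the inductive step, the length of the sequence needed is roughly $R_3(s-1,t) = 2^{Ct^{s-3}\log t}$, and the per-step multiplicative shrinkage of $|W_i|$ is of order $t^{O(1)}$, so the required starting size is
\[
|W_0| \;\lesssim\; t^{O(1)\cdot R_3(s-1,t)} \;=\; 2^{O(t^{s-2}\log t)},
\]
matching the claimed bound after choosing $C$ large. The base case $s=4$ is handled directly: here the auxiliary coloring is just a 2-coloring of pairs, no hypergraph induction is available below, and one runs the iteration for $O(t)$ steps with a density threshold of order $1/t$; a final application of the graph Ramsey bound $R(4,t) \leq t^{O(1)}$ to close up the red $K_4^{(3)}$ gives exactly the exponent $t^2\log t$.

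\textbf{Main obstacle.} The principal difficulty is choosing the potential function correctly so that per-step shrinkage of $W_i$ is only polynomial in $t$ while still making quantifiable progress towards a monochromatic configuration: a naive choice costs a factor of $R_3(s-1,t)$ per step and recovers the tower bound. The second delicate point is the base case $s=4$, where the $\log t$ factor in the exponent depends on a tight final application of graph Ramsey and cannot be loosened to a polylog; any slack here propagates through the induction and inflates the exponent in the $t^{s-2}\log t$ term for all larger $s$.
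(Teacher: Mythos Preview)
The paper does not prove this theorem at all: it is quoted from Conlon--Fox--Sudakov \cite{ConFoxSud} and used purely as a black box. Indeed the paper remarks explicitly that ``any weaker bound would suffice'': the only application is with $s=7$ and $t=m=\lceil \eps(\log n/\log\log n)^{1/5}\rceil$, to conclude $R(K_m^{(3)},K_7^{(3)})\le n^{\eps^4}$, so even a crude tower-type bound in $m$ would be enough to cover the leftover set $J$ by red $K_m^{(3)}$'s. There is therefore nothing to compare your argument to on the paper's side.

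As for the content of your sketch: it is a fair high-level summary of the Conlon--Fox--Sudakov strategy (iterated link argument with a density threshold rather than full monochromatic extraction, so that each step costs only a $t^{O(1)}$ factor instead of an exponential one, and then induction on $s$). But it remains a plan rather than a proof: the crucial point---exactly how the potential/majority bookkeeping guarantees that after $R_3(s-1,t)$ steps one can genuinely extract either a red $K_s^{(3)}$ or a set on which the induction hypothesis applies---is asserted but not carried out. If you actually need this result you should either cite \cite{ConFoxSud} as the present paper does, or fill in that bookkeeping carefully; for the purposes of the Fano-plane application, citing is the right move.
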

 Let the hyperedges of $\HH:=K_{2n-1}^{(3)}$ be two-colored with colors red and blue, without a blue $\FF$. In the proof we will make use of the following definitions.

 \begin{definition} \label{butterfly}
 Given two disjoint sets $A,B$ of vertices in $\HH$, we say four vertices $a_1,a_2\in A$, $b_1,b_2\in B$ form a red \emph{butterfly} if up to symmetry the two hyperedges $a_1a_2b_1$ and $a_2b_1b_2$ are red in $\HH$. 

 We denote with $|\overrightarrow{AB}|_r$ ($|\overrightarrow{AB}|_b$) the number of red (blue) hyperedges in $\HH$ of the form $ab_1b_2$ with $a\in A, b_1,b_2\in B$.
 Given three disjoint sets $A,B,C$ of vertices in $\HH$, we denote with $|ABC|_r$ ($|ABC|_b$) the number of red (blue) hyperedges of the form $abc$ with $a\in A,b\in B, c\in C$. 

For $W \subset V(\HH), v\notin \HH$, denote $G_{v,W}^{blue}$ the blue link graph of $v$ in $W$, i.e.\ the graph on $W$ with $ab$ being an edge iff $abv$ is blue in $\HH$. Analogously, $G_{v,W}^{red}$ defines the red link graph. 

For $t\in \mathbb{N}$, we define the complete directed bipartite graph $\overrightarrow{K}_{t,t}$ to be the directed graph on vertex set $A \cup B$ with $|A|=|B|=t$, $A$ and $B$ disjoint, and the arc set $\{ab | \ a\in A, b\in B \}$. 
 \end{definition}

The following theorem is a directed version of the K{\"o}v{\'a}ri--S{\'o}s--Tur{\'a}n Theorem \cite{Sos}.  
 \begin{theorem} \label{finding directed}
Let $t,m\in \mathbb{N}$. Define $D$ to be a digraph with vertex set $A \cup B$, where $A$ and $B$ are disjoint, and $|A|=|B|=m$. If the number of arcs from $A$ to $B$ is at least $C'm^{2-1/t}$ for $C'$ being a constant large enough only depending on $t$, then $D$ contains a directed $\overrightarrow{K}_{t,t}$ from $A$ to $B$.     
 \end{theorem}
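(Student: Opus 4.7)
The plan is to adapt the classical double-counting proof of the Kővári–Sós–Turán theorem to the directed bipartite setting; the arc orientations play no role beyond tracking out-neighbors. Write $d^+(a)$ for the out-degree of $a \in A$ into $B$, so that by hypothesis $\sum_{a\in A} d^+(a) \geq C' m^{2-1/t}$. I would count in two ways the quantity
\[
N \;=\; \bigl|\{(a, S) : a \in A,\ S \subseteq B,\ |S|=t,\ ab \in D \text{ for every } b \in S\}\bigr|.
\]
Directly, $N = \sum_{a\in A} \binom{d^+(a)}{t}$, so by convexity of $x\mapsto \binom{x}{t}$ and Jensen's inequality,
\[
N \;\geq\; m\binom{\bar d}{t}, \qquad \bar d \;:=\; \frac{1}{m}\sum_{a\in A} d^+(a) \;\geq\; C' m^{1-1/t}.
\]

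On the other hand, if $D$ contains no directed $\overrightarrow{K}_{t,t}$ from $A$ to $B$, then no $t$-subset of $B$ lies in the common out-neighborhood of $t$ or more vertices of $A$, so $N \leq (t-1)\binom{m}{t}$. Comparing the two estimates, the leading term of the lower bound is $\frac{(C')^t}{t!}\, m^t$ while the leading term of the upper bound is $\frac{t-1}{t!}\, m^t$; hence any $C'$ with $(C')^t > t-1$, taken slightly larger to swallow the lower-order corrections, forces a contradiction and produces the required $\overrightarrow{K}_{t,t}$.

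The main (and essentially the only) technicality is that $\binom{d^+(a)}{t} = 0$ when $d^+(a) < t$, so Jensen should be applied to the polynomial extension $\binom{x}{t} = x(x-1)\cdots(x-t+1)/t!$, or equivalently one can first discard the $a \in A$ with $d^+(a)$ below some small threshold and note that their total contribution to the arc count is negligible compared with $C' m^{2-1/t}$. Beyond this routine bookkeeping no real obstacle is expected; the statement is a direct directed analogue of KST.
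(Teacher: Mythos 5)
Your proof is correct and is exactly the standard Kővári–Sós–Turán double count that the paper has in mind: the paper states this theorem without proof, simply calling it "a directed version of the Kővári–Sós–Turán Theorem," and your argument (counting pairs $(a,S)$ with $S$ a $t$-subset of the out-neighborhood of $a$, bounding below by convexity and above by $(t-1)\binom{m}{t}$ in the absence of a $\overrightarrow{K}_{t,t}$) is the intended justification, including the correct handling of the Jensen technicality.
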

 
\noindent
 The following tool consisting of the next two Lemmas will be used multiple times in the main proof. 
\begin{lemma}\label{claimbluetri2}
Let $A,B,C \subseteq V(\HH)$ such that $|A|=|B|=|C|= m$ and assume that there are at most 1000 vertex-disjoint red butterflies connecting each pair of the three sets $A,B,C$.

 Then there exists an absolute constant $t>0$ such that for $m$ big enough  $$|\overrightarrow{AB}|_r ,|\overrightarrow{BC}|_r,|\overrightarrow{CA}|_r \leq m^{3-1/t} \quad \text{ or } \quad |\overrightarrow{BA}|_r ,|\overrightarrow{CB}|_r,|\overrightarrow{AC}|_r \leq m^{3-1/t}.$$ 
\end{lemma}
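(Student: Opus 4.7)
I would argue by contradiction. Suppose neither conclusion of Lemma~\ref{claimbluetri2} holds; then at least one quantity from each of the two ``cyclic'' triples $\{|\overrightarrow{AB}|_r,|\overrightarrow{BC}|_r,|\overrightarrow{CA}|_r\}$ and $\{|\overrightarrow{BA}|_r,|\overrightarrow{CB}|_r,|\overrightarrow{AC}|_r\}$ exceeds $m^{3-1/t}$. Using the natural $S_3$-action on $\{A,B,C\}$ this reduces the analysis to three essentially distinct configurations of two large direction counts: (i) the pair-aligned case where $|\overrightarrow{AB}|_r$ and $|\overrightarrow{BA}|_r$ are both large; (ii) the shared-heavy-side case where $|\overrightarrow{AB}|_r$ and $|\overrightarrow{CB}|_r$ are both large; and (iii) the shared-light-side case where $|\overrightarrow{AB}|_r$ and $|\overrightarrow{AC}|_r$ are both large.

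First I would clean up. Since there are at most $1000$ vertex-disjoint red butterflies between each pair, a standard greedy matching argument produces a vertex cover of size at most $4\cdot 1000=4000$ per pair. Deleting the union of these three covers yields subsets $A'\subseteq A,\,B'\subseteq B,\,C'\subseteq C$ of size at least $m-12000$ between which no red butterfly connects any pair. For $m$ large, each of the two large direction counts remains $\Omega(m^{3-1/t})$ on the cleaned sets.

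For each large count I would then apply Theorem~\ref{finding directed} to an auxiliary bipartite digraph on the two relevant sets. Concretely, for $|\overrightarrow{AB}|_r$ I use the digraph on $A'\cup B'$ with an arc $a\to b$ whenever there exists $b'\in B'\setminus\{b\}$ with $abb'$ red. Double counting gives at least $2|\overrightarrow{AB}|_r/m\geq m^{2-1/t}$ arcs, so Theorem~\ref{finding directed} yields a $\overrightarrow{K}_{t,t}$ with parts $A^*\subseteq A',\,B^*\subseteq B'$ of size $t$. Combined with the absence of red butterflies in $A'\cup B'$ (which forces that for each $(a,b)\in A^*\times B^*$ there is no red triple $aa'b$), this promotes the $\overrightarrow{K}_{t,t}$ to a \emph{blue block}: every triple $aa'b$ with $a\in A^*,\,a'\in A'\setminus\{a\},\,b\in B^*$ is blue. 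The second large direction count yields an analogous blue block of a transverse type.

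The final and most delicate step is to locate an embedded blue copy of $\FF$ inside the union of the two blue blocks, which contradicts the ambient assumption that $\HH$ has no blue Fano plane. In configuration~(i) the two blocks live inside the single pair $(A',B')$ and I would try a $4$-vs-$3$ split of the Fano vertices; in (ii) and (iii) the two blocks span different pairs and a tripartite split such as $3$ in $A^*$, $3$ in $C^*$ and $1$ in $B^*\cap B^{\#}$ is natural. The main obstacle here is that a simple parity count on incidence degrees shows that no Fano embedding is supported by a single type of blue block, so the two blocks must cooperate on the same $7$ vertices; in the cross configurations this is hampered by the fact that the two applications of Theorem~\ref{finding directed} may a priori return sets with negligible intersection on the common side. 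I would overcome this by taking the absolute constant $t$ large (at the cost of a slightly worse exponent) and pigeonholing over the many available $\overrightarrow{K}_{t,t}$'s produced by Theorem~\ref{finding directed} to force the required overlap of the two extracted subsets.
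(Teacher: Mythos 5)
Your cleaning step and the use of Theorem~\ref{finding directed} are sound and close in spirit to what the paper does: a maximal family of vertex-disjoint butterflies gives a cover of size at most $4000$ per pair, and the no-butterfly property does convert an arc ``$a\to b$ witnessed by a red $abb'$'' into the statement that every triple $a''ab$ with $a''\in A'$ is blue. The fatal problem is the last step. Each of your two blue blocks supplies blue triples of a single \emph{type} with respect to the partition $\{A,B,C\}$, and in every one of your three configurations a Fano plane cannot be assembled from those two types alone. In configuration (i) the types are $AAB$ and $ABB$, so every edge of the putative $\FF$ would meet both $A$ and $B$; since $\chi(\FF)=3$, any $2$-partition of its seven vertices leaves a monochromatic edge, which neither block covers (and in the intended application $A$ and $B$ are red cliques, so no blue edge inside a part is available). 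In configuration (ii) the types are $AAB$ and $CCB$, so every edge would meet $B$ in exactly one vertex; each vertex of $\FF$ lies in exactly three of its seven edges, so $3|V(\FF)\cap B|=7$, impossible. In configuration (iii) the types are $AAB$ and $AAC$, so every edge meets $A$ in exactly two vertices and $3|V(\FF)\cap A|=14$, again impossible. Your own parity remark rules out one block type; pushed one step further it rules out two, so no amount of pigeonholing for overlap between the extracted sets can rescue the embedding --- the obstruction is the edge-type census, not the lack of common vertices.

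What the paper does instead is genuinely three-partite at every stage. It uses the stronger dichotomy that \emph{every} non-butterfly cross pair $(u,v)$ has an all-blue link into at least one of the two sets, which yields a near-complete orientation digraph on $A_1\cup A_2\cup A_3$ (not only arcs where a red witness exists). It then shows that between any two of the sets both directions cannot be dense: if they were, it finds directed $\overrightarrow{K}_{t,t}$'s in both directions and brings in the \emph{third} set to produce a blue $\FF$ on a $1$--$2$--$4$ split across three parts, using blue triples of three distinct types (e.g.\ $A_1A_3A_3$, $A_1A_2A_2$, $A_3A_2A_2$). Finally it rules out the transitive majority orientation by the same kind of $1$--$2$--$4$ tripartite embedding, leaving only the cyclic orientation, which is exactly the lemma's dichotomy. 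To salvage your plan you would need to extract a third blue block involving the directions your two large counts do not control, and that is precisely where the paper's ``at least one all-blue direction for every non-butterfly pair'' dichotomy becomes indispensable.
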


\begin{proof}
 Removing at most $4000$ vertices from each set, we end up with sets $A_1 \subset A ,A_2 \subset B,A_3\subset C$ so that there are no red butterflies connecting them. Note that if for some $a\in A_i$, $b\in A_j$ the pair $ab$ is not contained in the same red butterfly, then either all hyperedges $\{abx : x\in A_i\}$ or all hyperedges $\{aby : y\in A_j\}$ are blue.

Create a digraph $\overrightarrow{G}$ with $V(\overrightarrow{G})=A_1 \cup A_2 \cup A_3$ as follows. We have $\overrightarrow{uv}\in E(\overrightarrow{G})$ if there are some $i,j$ with $i\neq j, u\in A_i, v \in A_j$ and the set of hyperedges $\{uvy : y\in A_j\}$ is all blue. Note that some edges might be oriented in both ways.

Let $t$ be the bipartite Ramsey number for $K_{4,4}$. That is, $t$ is the least integer such that every two-coloring of the edges of $K_{t,t}$ contains a monochromatic copy of $K_{4,4}$. Note that by Irving~\cite{Irving}, we have  $t\leq  48$. 

Suppose between $A_1$ and $A_2$, both the left and right density is at least $C'm^{-1/t}$ for $C'$ being a constant large enough, so by Theorem~\ref{finding directed} we can find complete directed $\overrightarrow{K}_{t,t}$ in both directions. That is, there are sets $D_1, D_2 \subset A_1$ and  $E_1, E_2 \subset A_2$ of sizes $t$ each such that the edges $\{\overrightarrow{ab}: a\in D_1, b\in E_1\}$ and $\{\overrightarrow{ba}: a\in D_2, b\in E_2\}$ are all present in $\overrightarrow{G}$. 

Pick an $x\in D_1$ and set $N_{x}:=N^+(x)\cap A_3$, where $N^+(x)$ means the out-neighborhood of $x$ in $\overrightarrow{G}$. Suppose for contradiction that $|N_{x}|\geq t$. Then in $\overrightarrow{G}$ there is a directed $K_{4,4}$ between $N_{x}$ and $E_1$; that is, we can find $S\subset N_{x}$ and $T\subset E_1$ such that $|S|,|T|\geq 4$ and (w.l.o.g) all edges of the form $\{\overrightarrow{st}: s\in S, t\in T\}$ are present in $\overrightarrow{G}$. But that is impossible, as letting $S=\{s_1,\ldots,s_4\}$, $T=\{t_1,\ldots,t_4\}$ the hyperedges $\{xs_1s_2,xt_1t_2,xt_3t_4,s_1t_1t_4,$ $s_1t_2t_3,s_2t_1t_3,s_2t_2t_4\}$ are all blue in $\HH$ and form a Fano plane.

This implies that every vertex in $D_1$ has an out-neighbourhood in $A_3$ of size at most $t$. Repeating the same argument after replacing $D_1$ by $E_2$, we get that every vertex in $E_2$ has an out-neighbourhood in $A_3$ of size at most $t$. So by removing at most $2t^2$ vertices from $A_3$ we get a set $A_3'$ of the property that all edges from $A_3'$ to $D_1$ and all edges from $A_3'$ to $E_2$ are present in $\overrightarrow{G}$. By the choice of $t$, we can find sets $W_1\subset D_1$ and $W_2\subset E_2$ of sizes at least four such that w.l.o.g. $(W_1,W_2)$ forms a directed $K_{4,4}$. Hence we have found a $4$-blowup of a transitive triangle. But this is impossible, as letting $W_1=\{v_1,v_2,v_3,v_4\}$, $W_2=\{w_1,w_2,w_3,w_4\}$, $a \in A_3'$ the hyperedges $\{av_1v_2,aw_1w_2,aw_3w_4,v_1w_1w_4,v_1w_2w_3,v_2w_1w_3$$,v_2w_2w_4  \}$ are all blue in $\HH$ and form a Fano plane. 

This proves that in $\overrightarrow{G}$ between $A_1$ and $A_2$, in one of the directions the density has to be less than $C'm^{-1/t}$. Repeating this argument for the other two pairs, we get that between any pair of sets from $A_1,A_2,A_3$, in one of the directions the density has to be less than $C'm^{-1/t}$ whereas the density in the other direction has to be at least $1-C'm^{-1/t}$. The majority orientation forms a transitive triangle or an oriented 3-cycle. Suppose now that the majority orientation forms a transitive triangle. Pick four vertices from each set at random. Then the probability that the 12 vertices do not form a $4$-blowup of the transitive triangle is at most $48C'm^{-t}$. Therefore, there exists a $4$-blowup of a transitive triangle in $\overrightarrow{G}$, giving a blue Fano plane in $\HH$. Thus, the majority orientation has to form a 3-cycle. 

W.l.o.g. let $A_1 \rightarrow A_2 \rightarrow A_3 \rightarrow A_1$ be the majority orientation in $\overrightarrow{G}$.
Then the density between $A_1$ and $A_2$ is at least $1-C'm^{-1/t}$ in $\overrightarrow{G}$, then this implies $|\overrightarrow{A_1A_2}|_r \leq C'm^{3-1/t}$ and as we only deleted at most 12000 vertices in the beginning, also  $|\overrightarrow{AB}|_r \leq 2C'm^{3-1/t}$. Repeating this argument for the other pairs gives us
$ |\overrightarrow{BC}|_r \leq 2C'm^{3-1/t}$  and  $ |\overrightarrow{CA}|_r \leq 2C'm^{3-1/t}.$ By choosing  $t$ slightly bigger we get rid of the constant $2C'$ for large enough $m$.   
\end{proof}

\noindent
Lemma~\ref{claimbluetri2} can be improved in the following way. 
\begin{lemma}\label{claimbluetri}
Let $A,B,C \subseteq V(\HH)$ such that $|A|=|B|=|C|= m$ and assume that there are at most 1000 vertex-disjoint red butterflies connecting each pair of the three sets $A,B,C$.

 Then there exists an absolute constant $t$ such that for $m$ big enough  $$|\overrightarrow{AB}|_r ,|\overrightarrow{BC}|_r,|\overrightarrow{CA}|_r, |\overrightarrow{BA}|_b ,|\overrightarrow{CB}|_b,|\overrightarrow{AC}|_b \leq m^{3-1/t}$$
 or
 $$|\overrightarrow{BA}|_r ,|\overrightarrow{CB}|_r,|\overrightarrow{AC}|_r,|\overrightarrow{AB}|_b ,|\overrightarrow{BC}|_b,|\overrightarrow{CA}|_b \leq m^{3-1/t}.$$ 
\end{lemma}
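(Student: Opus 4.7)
My plan is to apply Lemma~\ref{claimbluetri2} and exploit cyclic symmetry. Up to relabeling (the two cases of Lemma~\ref{claimbluetri2} are treated symmetrically), that lemma supplies an absolute constant $t_0$ with $|\overrightarrow{AB}|_r, |\overrightarrow{BC}|_r, |\overrightarrow{CA}|_r \leq m^{3-1/t_0}$; since this conclusion is invariant under the cyclic rotation $(A,B,C) \mapsto (B,C,A)$, it suffices to establish $|\overrightarrow{BA}|_b \leq m^{3-1/t}$ for some (possibly larger) absolute constant $t$, and the remaining two blue bounds $|\overrightarrow{CB}|_b, |\overrightarrow{AC}|_b$ then follow by the analogous argument applied to the cyclically rotated sets.

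Suppose for contradiction that $|\overrightarrow{BA}|_b \geq m^{3-1/t}$, with $t$ to be chosen large in terms of $t_0$. I will produce a blue copy of $\FF$ in $\HH$, contradicting the standing assumption. The combinatorial crux is the following $(4,1,2)$-coloring of $\FF$ in which every line is of one of the three types $1B2A$, $1B2C$, or $1C2A$: using the standard lines $\{123,145,167,246,257,347,356\}$, assign Fano vertex~$1$ to role $B$, vertices $2$ and $3$ to role $C$, and vertices $4,5,6,7$ to role $A$. The quadruple $\{4,5,6,7\}$, being the complement of the line $\{1,2,3\}$, contains no Fano line; the line $\{1,2,3\}$ itself is of type $1B2C$; the two remaining lines through vertex~$1$ are of type $1B2A$; and the four lines through vertex~$2$ or~$3$ are of type $1C2A$. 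These three types are precisely the ones guaranteed to be mostly blue---$1B2C$ and $1C2A$ by the red bounds above, and $1B2A$ by our contradictory hypothesis.

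To realize this template, I would first apply a three-uniform K{\"o}v{\'a}ri--S{\'o}s--Tur{\'a}n theorem to the blue $1B2A$ hypergraph: after a random bipartition $A = A_1 \cup A_2$, the three-partite blue subhypergraph on $B \cup A_1 \cup A_2$ still has $\Omega(m^{3-1/t})$ triples, and K{\"o}v{\'a}ri--S{\'o}s--Tur{\'a}n extracts $b_* \in B$ together with subsets $A_1^* \subset A_1$ and $A_2^* \subset A_2$ of size at least $4$ such that $\{b_*, a_1, a_2\}$ is blue for every $a_1 \in A_1^*$ and $a_2 \in A_2^*$. Before this extraction I would discard from $B$ the (few) vertices whose red link in $C$ has more than $m^{2-1/(2t_0)}$ edges, and similarly discard from $A$ the atypical vertices and pairs with respect to $|\overrightarrow{CA}|_r$, so that the chosen $b_*$ and $A_i^*$ are generic. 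Now embed $\FF$ by setting $v_1 := b_*$, $v_4, v_6 \in A_1^*$ distinct, $v_5, v_7 \in A_2^*$ distinct, and $v_2, v_3 \in C$ distinct. The two $1B2A$ lines $\{v_1,v_4,v_5\}$ and $\{v_1,v_6,v_7\}$ are blue by construction, and a union bound over the remaining five lines (of types $1B2C$ and $1C2A$), using the red bounds from the first paragraph and the preprocessing, shows that only a vanishing fraction of the $\Omega(m^2)$ candidate pairs $(v_2,v_3) \in C^2$ yields any red line; in particular a completely blue choice exists, producing the desired blue $\FF$ in $\HH$. The main obstacle I expect is coordinating the constants $t$, $t_0$, and the Markov thresholds: $t$ must be large enough for K{\"o}v{\'a}ri--S{\'o}s--Tur{\'a}n to supply $|A_i^*| \geq 4$, while the discards in the preprocessing step must remove only a small fraction of vertices so that the final union bound still closes.
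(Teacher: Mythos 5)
Your overall strategy coincides with the paper's: apply Lemma~\ref{claimbluetri2}, reduce to a single blue bound by cyclic symmetry, and derive a contradiction by exhibiting a blue Fano plane distributed $(1,2,4)$ over the three classes, with the two lines through the apex into the $4$-set supplied by the contradictory hypothesis and the remaining five lines supplied by the red bounds. Your template (apex in $B$, pair in $C$, $4$-set in $A$) is exactly the cyclic rotation of the one the paper uses (apex in $A$, pair in $B$, $4$-set in $C$), and your $(4,1,2)$-coloring of the Fano lines is valid.

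The execution, however, has a genuine gap in the final union bound. After the K{\"o}v{\'a}ri--S{\'o}s--Tur{\'a}n extraction you hold \emph{fixed} vertices $v_4,v_6\in A_1^*$ and $v_5,v_7\in A_2^*$, and you need each of the lines $\{v_2,v_4,v_6\}$, $\{v_2,v_5,v_7\}$, $\{v_3,v_4,v_7\}$, $\{v_3,v_5,v_6\}$ to be blue for most $v_2,v_3\in C$; this requires each of the four \emph{specific} pairs $\{v_4,v_6\},\{v_5,v_7\},\{v_4,v_7\},\{v_5,v_6\}$ to have only $o(m)$ red extensions into $C$. The global bound $|\overrightarrow{CA}|_r\le m^{3-1/t_0}$ only gives that the number of ``bad'' pairs in $A$ (those with more than, say, $m^{1-1/(2t_0)}$ red extensions) is at most $2m^{2-1/(2t_0)}$ --- a vanishing \emph{proportion} of all pairs, but still vastly more than the $\binom{8}{2}$ pairs inside $A_1^*\cup A_2^*$, so nothing prevents every pair among the eight extracted vertices from being bad. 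Your preprocessing cannot repair this: badness is a property of pairs, not of vertices, and even restricting the KST hypergraph to triples over good pairs only controls the crossing pairs, leaving the within-part pairs $\{v_4,v_6\}$ and $\{v_5,v_7\}$ uncontrolled. The paper closes exactly this hole by reversing the selection order: it first fixes the apex and the two vertices of the $2$-element class via per-vertex link conditions (satisfied by almost all vertices), and only then chooses the $4$-tuple, counting the $\gtrsim m^{4-2/t}$ pairs of disjoint edges in the apex's blue link graph and subtracting the $O(m^{4-1/t_0})$ tuples meeting the sparse red links of the two fixed pair-vertices; choosing $t$ a suitable constant multiple of $t_0$ makes the main term dominate. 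Your argument can be repaired along the same lines (choose the two disjoint edges $\{v_4,v_5\},\{v_6,v_7\}$ in the blue link of $b_*$ \emph{last}, by such a count, additionally forbidding the four bad cross pairs), but as written the step ``only a vanishing fraction of the candidate pairs $(v_2,v_3)$ yields any red line'' is not justified, and the obstacle is the quantifier order rather than the coordination of constants.
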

\begin{proof}
Applying Lemma~\ref{claimbluetri2}, we get a positive constant $t'$ such that w.l.o.g. 

$$ |\overrightarrow{AB}|_r ,|\overrightarrow{BC}|_r,|\overrightarrow{CA}|_r \leq m^{3-1/t'}.$$

\noindent
Let $t=3t'$. 
For the sake of contradiction, say that $|\overrightarrow{AC}|_b \geq m^{3-1/t}.$ Define  $$Z_1:= \left\{v\in A \bigg| \ e(G_{v,B,b}) \geq \frac{99}{100} \binom{m}{2}\right\} \quad \text{ and } \quad  Z_1':=\left\{v \in A \bigg| \ e(G_{v,C,b})\geq \frac{1}{2}m^{2-1/t} \right\}.$$

\noindent
Then $ |A \setminus Z_1| \leq  600 m^{1-1/t'}$, as otherwise $|\overrightarrow{AB}|_r \geq 600 m^{1-1/t'} \frac{1}{100} \binom{m}{2} > 2m^{3-1/t'}$. Also $|Z_1'|\geq 800m^{1-1/t'}$, as otherwise  $|\overrightarrow{AC}|_b < 800m^{1-1/t'} m^2 + \frac{1}{2} m^{2-1/t} m \leq m^{3-1/t}.$ Thus, one can choose a vertex $v\in Z_1 \cap Z_1'$.  Let 
$$Y_1:= \left\{ w\in B \bigg| e(G_{w,C,r})\leq 10m^{2-1/t'} \right\}.$$

\noindent
Then $|Y_1|\geq 4/5 m$ as otherwise $|\overrightarrow{BC}|_r\geq 2m^{3-1/t'}$. Because of the size of $Y_1$, $G_{v,B,b}$ has to contain an edge inside $Y_1$. Let $w_1w_2$ be such an edge. The number of 4-tuples $(a,b,c,d)$ of distinct vertices $a,b,c,d\in C$ with $ab,cd\in E(G_{v,C}^{blue})$ is at least

$$ \sum_{ab\in E(G_{v,C,b})} \left( e(G_{v,C,b})-deg(a)-deg(b) \right) \geq e(G_{v,C,b}) (e(G_{v,C,b})-2m) \geq \frac{1}{5}m^{4-2/t}. $$

The number of 4-tuples $(a,b,c,d)$ of distinct vertices $a,b,c,d\in C$ with $ad \notin E(G_{w_1,C}^{blue})$ or $bc \notin E(G_{w_1,C}^{blue})$ is at most $e(G_{w_1,C}^{red})m^2+m^2 e(G_{w_1,C}^{red}) \leq 20m^{4-1/t'}$. Similarly, the number of 4-tuple $(a,b,c,d)$ of distinct vertices $a,b,c,d\in C$ with $ac \notin E(G_{w_2,C}^{blue})$ or $bd \notin E(G_{w_2,C}^{blue})$ is at most $20m^{4-1/t'}$. Since $20m^{4-1/t'} + 20m^{4-1/t'} < \frac{1}{5}m^{4-2/t},$ there exists $a,b,c,d \in C$ such that $ab,cd\in E(G_{v,C}^{blue}); ad,bc\in E(G_{w_1,C}^{blue})$ and $ac,bd \in E(G_{w_2,C}^{blue})$. Thus, the hyperedges $vw_1w_2,vab,vcd,w_1ad,$ $w_1bc,w_2ac,w_2bd$ form a blue Fano plane; a contradiction, therefore we conclude that $|\overrightarrow{AC}|_b \leq m^{3-1/t}.$ Similarly, we get $|\overrightarrow{CB}|_b \leq m^{3-1/t}$ and $|\overrightarrow{BA}|_b \leq m^{3-1/t}.$

\end{proof}

\section{Proof of Theorem~\ref{mainthm}}
\subsection{Set up of the proof}

For the sake of contradiction, assume that there is a red-blue edge-coloring of $\HH:=K_{2n-1}^{(3)}$ without a blue $\FF$ and without a red $\Ti$. Fix such a coloring. 
\noindent
Let $\eps>0$ be a sufficiently small constant and assume that $n$ is sufficiently large. Set $$m=\left\lceil\eps \sqrt[5]{\frac{\log n}{\log \log n}}~\right\rceil.$$
Observe that $m^5\log m\leq \frac{\eps^5}{5}\log n$, hence we have by Theorem~\ref{Suda} $$R(K_m^{(3)},K_7^{(3)})\leq 2^{Cm^5 \log m}\leq n^{\eps^4}.$$

Since $\HH$ contains no blue $\FF$, it cannot contain a blue $K_7^{(3)}$ and we conclude that it contains a red $K_m^{(3)}$, call it $D_1$. Set $\HH_1:=\HH\setminus V(D_1)$ and find a red $K_m^{(3)}$, call it $D_2$, in $\HH_1$. Repeating this process, setting $\HH_{i+1}:=\HH_i\setminus V(D_i)$, we can find a red copy of $K_m^{(3)}$ in $\HH_{i+1}$, calling it $D_{i+1}$, as long as $|V(\HH_i)|\geq n^{\eps^4}$. At the end of this process we end up with a collection of vertex-disjoint red $K_m^{(3)}$-s $D_1,D_2,\ldots,D_d$, and a set $J$ of remaining vertices with $|J|\leq n^{\eps^4}$.

Create a graph $G_1$ with $V(G_1)=\{D_1,\ldots,D_d\}$, by connecting $D_i,D_j$ if in $\HH$ there are at least $1000$ vertex-disjoint red butterflies between them. The vertices of $G_1$ will be called blobs. The next two lemmas give information on the structure of $G_1$.

\begin{lemma} \label{K_4 lemma}
The complement of $G_1$ contains no $K_4$. 
\end{lemma}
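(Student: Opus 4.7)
The approach is by contradiction: assume four blobs $A,B,C,D$ (identifying each blob with its vertex set of size $m$) form a $K_4$ in the complement of $G_1$. By the definition of $G_1$, each of the six pairs among $A,B,C,D$ carries at most $999$ vertex-disjoint red butterflies, so the hypothesis of Lemma~\ref{claimbluetri} is met by each of the four triples $\{A,B,C\}$, $\{A,B,D\}$, $\{A,C,D\}$, $\{B,C,D\}$.

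For each such triple, Lemma~\ref{claimbluetri} returns one of two conclusions that together summarize as a cyclic orientation of the triangle. Indeed, for any pair $(X,Y)$ of blobs, the lemma forces exactly one of $|\overrightarrow{XY}|_r$ and $|\overrightarrow{YX}|_r$ to be at most $m^{3-1/t}$ while the other is of order $m^3$ (this follows because each conclusion pairs an upper bound on $|\overrightarrow{XY}|_r$ with an upper bound on $|\overrightarrow{YX}|_b$, i.e.\ a lower bound on $|\overrightarrow{YX}|_r$). I therefore orient $X\to Y$ whenever $|\overrightarrow{XY}|_r \leq m^{3-1/t}$. Because the smallness condition depends only on the pair $(X,Y)$ and not on the ambient triple, this orientation is globally consistent: the same pair receives the same direction in every triple that contains it. Reading off the two possible conclusions of Lemma~\ref{claimbluetri} now shows that both correspond to cyclic orientations on the three pairs of a triangle, so the four blobs give rise to a tournament on four vertices in which every one of the four triangles is a directed $3$-cycle.

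The contradiction will then come from the elementary fact that no tournament on four vertices has all triangles cyclic: in any such tournament some vertex $v$ has out-degree at least two, so it dominates two vertices $a$ and $b$, and the edge between $a$ and $b$ (in either direction) completes a transitive triangle through $v$. Equivalently, one can derive the contradiction by hand: after relabeling assume $\{A,B,C\}$ is oriented $A\to B\to C\to A$; cyclicity of $\{A,B,D\}$ with $A\to B$ fixed then forces $B\to D$ and $D\to A$, whereas cyclicity of $\{A,C,D\}$ with $C\to A$ fixed forces $A\to D$ and $D\to C$, so the pair $(A,D)$ is assigned opposite orientations, contradiction. The only subtlety I anticipate is justifying that the pair-orientation really is an intrinsic property independent of the triple used; once that is in place the tournament argument is essentially mechanical, and this is the step where I would expect to spend the most care.
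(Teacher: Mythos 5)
Your proposal is correct and follows essentially the same route as the paper: orient each pair by which direction of red edges Lemma~\ref{claimbluetri} forces to be small, observe that this gives a well-defined tournament on the four blobs in which every triangle must be cyclic, and derive a contradiction from the fact that every $4$-vertex tournament contains a transitive triangle. Your explicit justification that the orientation is intrinsic to the pair (because the lemma's conclusion makes the opposite direction dense, of order $m^3$) is exactly the point the paper leaves implicit when it asserts that ``every edge in $D$ is oriented in exactly one direction.''
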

\begin{proof}
For the sake of contradiction, assume that there are 4 blobs $A_1,A_2,A_3$ and $A_4$ which form a $K_4$ in the complement of $G_1$. 
Define a directed graph $D$ with vertex set $V(D)=\{A_1,A_2,A_3,A_4\}$ and an edge from blob $A_i$ to $A_j$ ($i \neq j$) iff $|\overrightarrow{A_iA_j}|_r  \leq m^{3-1/t}$ with $t$ from Lemma~\ref{claimbluetri}.
Applying Lemma~\ref{claimbluetri} on all subsets of size 3 of the 4 blobs gives that every edge in $D$ is oriented in exactly one direction. This means that $D$ is a tournament. However, a tournament on 4 vertices contains a transitive triangle and Lemma~\ref{claimbluetri} says this cannot happen. 
\end{proof}

\begin{lemma} \label{paths}
$G_1$ has one of the following forms:

\begin{itemize}
    \item [(i)] $V(G_1)=\{A_1,\ldots,A_a,B_1,\ldots,B_b\}$ such that $A_1,\ldots,A_a$ and $B_1,\ldots,B_b$ form vertex-disjoint paths or
    \item [(ii)]$V(G_1)=\{A_1,\ldots,A_a,B_1,\ldots,B_b,C_1,\ldots,C_c\}$ such that $A_1,\ldots,A_a$; $B_1,\ldots,B_b$ and $C_1,\ldots,C_c$ form vertex-disjoint paths.
\end{itemize}
\end{lemma}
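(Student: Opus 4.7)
The plan is to deduce this structural statement directly from the bound $\alpha(G_1)\le 3$ provided by Lemma~\ref{K_4 lemma}, via a standard minimum-path-cover argument in the spirit of Gallai--Milgram.

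First, observe that since the complement of $G_1$ contains no $K_4$, any independent set in $G_1$ has size at most $3$, i.e.\ $\alpha(G_1)\le 3$. Next, let $P_1,\dots,P_k$ be a partition of $V(G_1)$ into the \emph{smallest} possible number of vertex-disjoint paths in $G_1$ (where a single blob is allowed to count as a path of length~$0$). From each $P_i$ choose one endpoint $v_i$; if $P_i$ is a single vertex then $v_i$ is that vertex.

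I claim that $\{v_1,\dots,v_k\}$ is an independent set in $G_1$. Suppose on the contrary that $v_iv_j\in E(G_1)$ for some $i\ne j$. Traversing $P_i$ from its other endpoint to $v_i$, then using the edge $v_iv_j$, and then traversing $P_j$ from $v_j$ to its other endpoint produces a single path in $G_1$ covering $V(P_i)\cup V(P_j)$, which contradicts the minimality of $k$. Therefore $k\le \alpha(G_1)\le 3$.

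Thus $V(G_1)$ can be partitioned into at most three vertex-disjoint paths, which is exactly the content of the lemma: if $k\le 2$ we are in case (i) (allowing $b=0$ when $k=1$), and if $k=3$ we are in case (ii). There is essentially no obstacle in this argument; the only point requiring a moment of care is verifying that the merging operation in the independence claim genuinely produces a path (which it does, since the two paths were vertex-disjoint). All the real work was already done in Lemma~\ref{K_4 lemma}, and Lemma~\ref{paths} is its immediate structural consequence.
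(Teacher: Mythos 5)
Your proof is correct and takes essentially the same approach as the paper: both arguments reduce the lemma to Lemma~\ref{K_4 lemma} by observing that suitably chosen endpoints of an optimal system of vertex-disjoint paths form an independent set in $G_1$, hence there are at most three paths. The paper phrases this via iterated longest paths (with a leftover blob joining $A_1,B_1,C_1$ to form a forbidden independent $4$-set) rather than your Gallai--Milgram minimum-path-cover formulation, but the content is identical; your handling of $k=1$ as the degenerate $b=0$ instance of case (i) is also consistent with how the two-paths case is later analyzed.
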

\begin{proof}
Let $A_1,\ldots,A_a$ be a longest path in $G_1$. If $V(G_1) = \{ A_1,\ldots,A_a \}$ then one can find a red $\Ti$ in $\HH$ just by jumping from red blob to red blob along the path using the red butterflies. Let $B_1,\ldots,B_b$ be a longest path in $G_1$ on the vertices $V(G_1) \setminus \{A_1,\ldots,A_a\}$. If $V(G_1) = \{ A_1,\ldots,A_a,B_1,\ldots,B_b \}$ we are in case (i). Otherwise we can take the longest path $C_1,\ldots,C_c$ in $V(G_1) \setminus \{A_1,\ldots,A_a, B_1,\ldots,B_b\}$.  In this case $V(G_1)= \{ A_1,\ldots,A_a,B_1,\ldots,B_b,C_1,\ldots,C_c\}$ as otherwise any blob in $V(G_1) \setminus \{ A_1,\ldots,A_a,B_1,\ldots,$ $B_b,C_1,\ldots,C_c\}$ would form a $K_4$ in the complement of $G_1$ together with $A_1,B_1$ and $C_1$. This is not possible by Lemma~\ref{K_4 lemma}. 

\end{proof}
\noindent
In the next two Subsections the two cases from Lemma~\ref{paths} will be handled separately. The strategy is to build a long red tight path using these two or three blocks. When a long path say starting in $A_1$ and ending in $A_a$ is found, it is clear that some of the vertices inside the block can be used to close the cycle.  

\subsection{The two paths case}
\label{2paths}

In this case $G_1$ is the vertex-disjoint union of two paths, i.e.\ in $\HH$ we have vertex-disjoint red $K_m$-s $\{A_1,\ldots,A_a,B_1,\ldots,B_b\}$ and a set $J$ of junk vertices with $|J|\leq n^{\eps^4}$. For every $i,j$ 
there are at least $1000$ red butterflies between $A_i$ and $A_{i+1}$ and also between $B_j$ and $B_{j+1}$. Slightly abusing notation, let $P_1=\cup_i A_i$ and $P_2=\cup_j B_j$. Note that if $|P_i|\geq n$ for some $i\in\{1,2\}$ then we can embed the tight path $P_n^t$ into $P_i$ just by walking through each blob and jumping from blob to blob by using the hyperedges from the red butterflies. We know that $|P_1|+|P_2|+n^{\eps^4}\geq |V(\HH)|=2n-1$. So $n-n^{\eps^4}\leq|P_i|\leq n-1$ for $i=1,2$.

\begin{definition}\label{triple triangle}
A red triple triangle between $A_i$ and $B_j$ is a set of vertices $w,x,y,z\in A_i$ and $v\in B_j$ (or $w,x,y,z\in B_j$ and $v\in A_i$) so that $wxv,xyv,yzv$ is red in $\HH$. 
\end{definition}
\noindent
Observe that when we have a red triple triangle between $A_i$ and $B_j$ we can find a red tight path of length $m+1$ by swallowing one additional vertex from $B_j$ using the red triple triangle. If there is no red triple triangle between two blobs then there also have to be few red hyperedges between the blobs. 
\begin{lemma} \label{triple}
If there is no red triple triangle between $A_i$ and $B_j$, then  $|A_iB_j|_r+|B_jA_i|_r\leq 20m^2$. 
\end{lemma}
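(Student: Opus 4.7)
The plan is to reformulate the hypothesis in terms of the red link graphs and then invoke an elementary structural fact about $P_4$-free graphs. First, I observe that a red triple triangle with $w,x,y,z \in A_i$ and $v \in B_j$ is exactly a path on four distinct vertices $w$--$x$--$y$--$z$ in the red link graph $G_{v,A_i}^{red}$ (edges of the link correspond to hyperedges of $\HH$ through $v$); analogously for triple triangles with the roles of $A_i$ and $B_j$ swapped. Hence the hypothesis ``no red triple triangle between $A_i$ and $B_j$'' is equivalent to: for every $v\in B_j$, $G_{v,A_i}^{red}$ contains no $P_4$ as a subgraph, and for every $v\in A_i$, $G_{v,B_j}^{red}$ contains no $P_4$ as a subgraph.

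Next, I would establish the elementary fact that every graph on $k$ vertices with no $P_4$ subgraph has at most $k$ edges. The argument is a short case analysis inside a connected component $H$: let $v$ be a vertex of maximum degree $\Delta$ in $H$. If $\Delta \geq 3$, then either an edge between two neighbors of $v$, or a vertex at distance two from $v$, would produce a $P_4$; hence $H$ must be the star $K_{1,\Delta}$. If $\Delta \leq 2$, then $H$ has maximum degree at most two, so it is a path or a cycle, and the $P_4$-free condition restricts $H$ to $\{P_1, P_2, P_3, C_3\}$. In every case a component on $j$ vertices carries at most $j$ edges, so summing over components yields $e(G) \leq k$.

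Applying this bound to every link graph, $e(G_{v,B_j}^{red}) \leq |B_j| = m$ for each $v \in A_i$, so
\[
|A_iB_j|_r \;=\; \sum_{v \in A_i} e(G_{v,B_j}^{red}) \;\leq\; m \cdot m \;=\; m^2,
\]
and symmetrically $|B_jA_i|_r \leq m^2$. Adding the two gives $|A_iB_j|_r + |B_jA_i|_r \leq 2m^2$, comfortably within the claimed $20m^2$; the generous slack presumably reflects that only the order $O(m^2)$ will matter in the case analysis to come. There is no genuine obstacle here: the only step requiring any care is the brief structural lemma on $P_4$-free graphs, which is completely elementary.
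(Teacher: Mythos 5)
Your proposal is correct and follows essentially the same route as the paper: the absence of a red triple triangle makes each red link graph $P_4$-free, and a $P_4$-free graph on $m$ vertices has $O(m)$ edges, which is then summed over the $m$ choices of apex vertex. The only difference is that you prove the sharp bound of $m$ edges (components are stars or triangles) where the paper simply uses the cruder bound $10m$.
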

\begin{proof}
Pick any vertex $v\in B_j$ and consider its red link graph in $A_i$. If $v$ is not in a red triple triangle, then the red link graph does not contain a path of length $3$, hence the number of edges in this link graph is at most $10m$. So the number of red hyperedges between $B_j$ and $A_i$, assuming that there are no red triple triangles, is at most $20m^2$. 
\end{proof}

\begin{lemma} \label{decomp}
$V(\HH)$ can be decomposed as $V(\HH)=A \cup B \cup J'$ with $|A|,|B|\geq n-n^{\eps^3}, |J'|\leq n^{\eps^3}$ such that there are at most $500n^3 /m$ blue hyperedges inside $A$ and respectively in $B$.
\end{lemma}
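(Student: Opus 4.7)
My plan is to take $A := P_1$, $B := P_2$, and $J' := J$. The size requirements are immediate since $|J| \le n^{\eps^4} \le n^{\eps^3}$ and both $|P_1|, |P_2| \ge n - n^{\eps^4}$, so the whole work reduces (by symmetry) to showing that at most $500 n^3/m$ blue hyperedges lie inside $P_1$. Since each blob $A_i$ is a red $K_m^{(3)}$, every blue hyperedge in $P_1$ must meet at least two distinct blobs, and I will split the count according to its meeting pattern with the blobs.

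The easy contributions are the following: (i) blue hyperedges with two vertices in one blob and one in another total at most $\sum_{i \ne j}|\overrightarrow{A_iA_j}| \le a(a-1) m \binom{m}{2} = O(n^2 m)$, which is $o(n^3/m)$ since $m^2 \ll n$; and (ii) blue $1{+}1{+}1$-hyperedges whose blob-triple contains a pair adjacent in $G_1$ contribute at most $O(a^2 m^3) = O(n^2 m) = o(n^3/m)$, because only $O(a) = O(n/m)$ pairs of $P_1$-blobs are adjacent in $G_1$. Everything thus reduces to bounding $\sum |A_iA_jA_k|_b$ over triples $(A_i, A_j, A_k)$ of pairwise non-$G_1$-adjacent $P_1$-blobs.

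For such a triple, Lemma~\ref{claimbluetri} applies and, after relabelling, gives
\[
|\overrightarrow{A_iA_j}|_r,\; |\overrightarrow{A_jA_k}|_r,\; |\overrightarrow{A_kA_i}|_r,\; |\overrightarrow{A_jA_i}|_b,\; |\overrightarrow{A_kA_j}|_b,\; |\overrightarrow{A_iA_k}|_b \le m^{3-1/t}.
\]
Thus blue $1{+}2$-edges have density $1-o(1)$ in the three ``forward'' directions $\overrightarrow{A_iA_j},\overrightarrow{A_jA_k},\overrightarrow{A_kA_i}$ and density $o(1)$ in the three reverse ones. To bound $|A_iA_jA_k|_b$, I would carry out a Fano-counting argument: take $1{+}3{+}3$ embeddings of the Fano plane (one vertex $v \in A_i$ plus Fano-independent triples $S_j \subseteq A_j$ and $S_k \subseteq A_k$, with a consistent Fano-labelling). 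A short case-check on the $7$ lines of the Fano plane shows that such valid partitions exist and that in any of them exactly three of the $1{+}2$-type Fano-lines land in blue-scarce reverse directions, while the remaining three land in blue-heavy forward directions. Writing $\rho := |A_iA_jA_k|_b/m^3$, the expected number of all-blue Fano embeddings over a uniformly random choice of $(v,S_j,S_k)$ is then of order $m^7 \cdot \rho \cdot m^{-3/t}$; the no-blue-Fano hypothesis forces this to be smaller than $1$, yielding $|A_iA_jA_k|_b \le m^{3-c}$ for some absolute constant $c > 1$.

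Summing over the $O(a^3) = O((n/m)^3)$ pairwise non-$G_1$-adjacent triples gives $O(n^3/m^c) = o(n^3/m)$, and together with the two easy contributions this establishes the required bound $500 n^3/m$. The main obstacle is the Fano-counting step: one has to exhibit a valid $1{+}3{+}3$ Fano-partition with the claimed line-type distribution (a short finite verification on the Fano plane's line structure) and argue that the seven Fano-edges behave approximately independently in the count, typically via a standard supersaturation or deletion argument.
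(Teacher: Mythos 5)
There is a genuine gap, and it sits exactly at the step you flag as the main obstacle: the claim that Lemma~\ref{claimbluetri} alone forces $|A_iA_jA_k|_b\le m^{3-c}$ for a pairwise non-adjacent triple of $P_1$-blobs is false. Lemma~\ref{claimbluetri} gives a cyclic orientation in which three of the six $1{+}2$ directions are almost entirely blue and the other three almost entirely red, but it says nothing about the $1{+}1{+}1$ edges; indeed, the paper's own sharpness construction in Section~2 realizes precisely this cyclic pattern with \emph{all} $1{+}1{+}1$ triples blue and no blue $\FF$, so no bound of the form $m^{3-c}$ can follow from this information. Your Fano-counting step cannot repair this: a short check shows that \emph{every} $1{+}3{+}3$ partition of the Fano plane into $\{v\}\cup S_j\cup S_k$ with $S_j,S_k$ line-free places at least two of the seven lines in the blue-scarce reverse directions, so the number of all-blue embeddings can be zero no matter how large $|A_iA_jA_k|_b$ is; moreover the inference ``expected count $<1$, hence contradiction'' runs the wrong way --- you would need a positive \emph{lower} bound on the probability of an all-blue embedding, which the blue-scarce lines preclude. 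A secondary error: you discard triples containing a $G_1$-adjacent pair on the grounds that only $O(a)$ such pairs exist, but Lemma~\ref{paths} only asserts that the $A$-blobs admit a Hamiltonian path in $G_1$; since the complement of $G_1$ is $K_4$-free (Lemma~\ref{K_4 lemma}), in fact $\Omega(a^2)$ pairs of $A$-blobs are adjacent, and triples containing such a pair are not covered by Lemma~\ref{claimbluetri} at all.

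The missing idea is that the bound on blue edges inside $A$ must be extracted from the interaction with the \emph{other} path. The paper first removes a maximum matching of red triple triangles between $A$-blobs and $B$-blobs (the matching must be small, since each matched pair lets the red tight path through $P_1$ or $P_2$ absorb an extra vertex and it would otherwise reach length $n$); by Lemma~\ref{triple} every surviving pair then satisfies $|A_iB_j|_r+|B_jA_i|_r\le 20m^2$, i.e.\ the $1{+}2$ edges between any surviving $A$-blob and $B$-blob are almost all blue in \emph{both} directions. A blue Fano plane is then embedded via the $3{+}4$ partition (a line plus its complementary quadrilateral): one vertex in each of three $A$-blobs spanning a candidate blue $1{+}1{+}1$ line, and four vertices in a single $B$-blob carrying the other six lines, all of which now lie in blue-dense directions, so a simple union bound gives $|A_1'A_2'A_3'|_b\le 400m^2$ for \emph{all} remaining triples, adjacent or not. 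This is also why the junk set grows to $n^{\eps^3}$ (the matched blobs are discarded), whereas your choice $J'=J$ keeps them.
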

\begin{proof}
Consider the bipartite graph $G_2$, with vertex sets $\{A_1,\ldots,A_a\}$ and $\{B_1,\ldots,B_b\}$. Connect $A_iB_j$ by an edge iff between $A_i$ and $B_j$ there is a red triple triangle in $\HH$. Let $M$ be a largest matching in $G_2$. Then we can embed into $\HH$ a tight red path of length $|P_i|+|M|/2$ for some $i\in\{1,2\}$, because at least half of the triple triangles represented by edges from the matching have to go in the same direction. 
In particular since $|P_i|\geq n-n^{\eps^4}$ for $i=1,2$ this implies $|M|\leq 2n^{\eps^4}$. Put all blobs covered by $M$ into $J$ and get a new rubbish set $J'$. We will have $|J'|\leq |J|+2|M|m\leq n^{\eps^3}$ vertices. \\
The subgraph of $G_2$ on the blobs which have not been removed spans an independent set. Let $A$ be the set of vertices in $P_1$ which have not been removed and let $B$ be the vertices in $P_2$ which have not been removed. The following argument shows that for three different blobs $A_1',A_2',A_3'$ from $\{A_1,\ldots,A_a\}$ which have not been removed, $|A_1'A_2'A_3'|_b \leq 400m^2$. For contradiction, assume there are more than that many blue hyperedges. Take a blob $B_i$ which has not been removed from $\{B_1,\ldots,B_b\}$. By Lemma~\ref{triple} $$|\overrightarrow{A_1'B_i}|_r, |\overrightarrow{A_2'B_i}|_r,|\overrightarrow{A_3'B_i}|_r,|\overrightarrow{B_iA_1'}|_r,|\overrightarrow{B_iA_2'}|_r,|\overrightarrow{B_iA_3'}|_r \leq 20m^2.$$ Picking at random one vertex each of $A_1',A_2'$ and $A_3'$, and 4 vertices from $B_i$, these vertices do not form a blue Fano plane with probability at most $1-400m^{-1}+ 6 \cdot 50m^{-1}$, thus, there has to exist a blue Fano plane. We conclude $|A_1'A_2'A_3'|_b \leq 400m^2$. Therefore there are at most $400m^2(n/m)^3 + m^3(n/m)^2 \leq 500n^3 /m$ blue hyperedges inside $A$. Similarly, this holds for $B$.  
\end{proof}

 \begin{definition}  \label{special}
Call a vertex $v\in B$ \textit{special} if $e(G_{v,A}^{red})\geq \frac{1}{5} \eps^5 \binom{n}{2}$. Similarly, call a vertex $v\in A$ \textit{special} if  $e(G_{v,B}^{red})\geq \frac{1}{5} \eps^5 \binom{n}{2}$.
 \end{definition}

\begin{lemma} \label{embedd}
Let $V(\HH)=A \cup B \cup J'$ be the decomposition from Lemma~\ref{decomp}. If there are at least $nm^{-1/20}$ \textit{special} vertices in $A$ or $B$, then one can find a red $\Ti$ in $\HH$.
\end{lemma}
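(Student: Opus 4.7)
The plan is to build a red tight path of length $n$ by gluing a long red tight path inside $A$ to a short red tight path inside $B$ through a single special bridge vertex. Assume without loss of generality that the set $S$ of special vertices contained in $A$ satisfies $|S| \geq nm^{-1/20}$. I would aim for a path of the form
\[
a_1, a_2, \ldots, a_k,\, v,\, b_1, b_2, \ldots, b_l \qquad (k + 1 + l = n),
\]
with $v \in S$, $a_i \in A \setminus \{v\}$ and $b_j \in B$, chosen so that the only consecutive triples crossing between $A$ and $B$ are the three at the junction: $a_{k-1}a_k v$ (type AAA), $a_k v b_1$ (type AAB), and $v b_1 b_2$ (type ABB). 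Every other consecutive triple lies entirely inside $A$ or inside $B$. Taking $k = |A| - 1$ forces $l = n - |A| \leq n^{\eps^3}$, so the $B$-part of the path is very short.

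First I would exploit the structure from Lemma~\ref{decomp}: both $A$ and $B$ carry at most $500n^3/m$ blue hyperedges, so the red hypergraphs they span are almost complete. A standard greedy or rotation-type argument then produces red tight paths inside $A$ of length $|A|-1$ and inside $B$ of length $l$, with enough freedom in the choice of the ordered endpoint pair that we can realise essentially any prescribed endpoints $(a_{k-1}, a_k)$ and $(b_1, b_2)$ once they are chosen. Hence both interior paths are not the bottleneck.

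The bridge is where the real work lies. The ABB condition $v b_1 b_2$ red is essentially free: by the definition of special, $G^{red}_{v,B}$ has at least $\tfrac{\eps^5}{5}\binom{n}{2}$ edges, so a positive fraction of pairs in $B^2$ satisfies it, and for almost all of these pairs we can extend to a red tight path inside $B$ of the required short length. The AAA condition $a_{k-1}a_k v$ red holds for all but $O(n^3/m)$ triples $(a_{k-1}, a_k, v) \in A^3$, so there is no real obstruction from inside $A$ either.

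The main obstacle, and the part I expect to be the hardest, is the AAB crossing triple $a_k v b_1$ red, because specialness of $v$ controls only ABB triples through $v$, not AAB ones. My plan is to argue by contradiction: if for every $v \in S$ and essentially every $b \in B$ the number of $a \in A$ with $avb$ red were tiny, then combining this AAB-scarcity with the abundance of red ABB triples through $S$ and with the near-completeness of the red hypergraphs on $A$ and $B$, one should be able to locate seven vertices spanning a blue $\FF$, in the spirit of the Fano-finding arguments in Lemma~\ref{claimbluetri}. The resulting contradiction supplies a compatible triple $(a_k, v, b_1)$, and the flexibility from the first step of the plan then completes the two interior paths, producing the required red $\Ti$ in $\HH$.
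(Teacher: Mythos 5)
There is a genuine gap, and it sits exactly where you predicted: the crossing triple $a_kvb_1$. Your hoped-for contradiction --- that scarcity of red $AAB$-type triples, together with many red $ABB$-type triples through special vertices and near-complete red hypergraphs on $A$ and $B$, forces a blue Fano plane --- is simply false. Consider the coloring in which every triple inside $A$ and every triple inside $B$ is red, every triple with exactly one vertex in $A$ is red, and every triple with exactly two vertices in $A$ is blue. Every vertex of $A$ is special, $A$ and $B$ span no blue triples at all, and yet there is no blue copy of $\FF$: every blue edge meets $A$ in exactly two vertices, so a blue Fano plane on vertex set $F$ would give $\sum_e |e\cap A| = 14 = 3\,|F\cap A|$, which is impossible. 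In this coloring there is \emph{no} red triple with two vertices in $A$ and one in $B$, so your bridge can never be built; a red $\Ti$ still exists here, but only via a path of a completely different shape (e.g.\ $BBABBA\ldots$), so your single-bridge architecture is unsalvageable, not merely incomplete. A secondary problem: you take $k=|A|-1$, i.e.\ a red tight Hamilton path on $A$, but ``at most $500n^3/m$ blue triples inside $A$'' permits up to order $n/m$ vertices of $A$ whose entire link inside $A$ is blue, so you cannot cover all of $A$ and your budget $l\le n^{\eps^3}$ for the $B$-side is not justified.

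The paper's proof avoids crossing altogether. Working with special vertices in $B$ (say), it builds the path inside $A$ and \emph{absorbs} each special vertex $v\in B$ one at a time: since $e(G^{red}_{v,A})\ge \frac15\eps^5\binom n2$, the red link graph of $v$ in $A$ contains many paths $a$--$b$--$c$--$d$ on four vertices, so $v$ can be inserted as $a,b,v,c,d$ with all three consecutive triples $abv,bvc,vcd$ red --- and crucially all three are of the $AAB$ type that specialness of $v\in B$ actually controls. Each of the $nm^{-1/20}$ special vertices contributes one extra vertex beyond $A$, which beats the deficit $n-|A|\le n^{\eps^3}$ plus the $nm^{-1/10}$ vertices of $A$ the greedy construction is allowed to waste. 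If instead the special vertices lie in $A$, one symmetrically builds the path inside $B$ and absorbs them there. So the correct reduction is to build the long path in the set \emph{not} containing the special vertices; your ``w.l.o.g.\ special vertices in $A$, path in $A$'' choice pairs the specialness hypothesis with the one crossing triple it cannot control.
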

\begin{proof}
Suppose there are at least $nm^{-1/20}$ \textit{special} vertices in w.l.o.g. $B$. We now show that we can absorb enough of these \textit{special} vertices from $B$ to find a tight red path of length $n$.  
Let $a,b,c,d \in A,v\in B$. A tuple $(c,d)$ is called \textit{reachable} from $(a,b)$ if both $abc$ and $bcd$ are red. Further, a tuple $(c,d)$ is called \textit{reachable} from $(a,b)$ via $v$ if all $abv,bvc,vcd$ are red.   
A tuple $(a,b)$ is called open if there exists at most $n^2m^{-1/4}$ tuples $(c,d)$ with $c,d\in A$ such that
$(c,d)$ is not \textit{reachable} from $(a,b)$.  Define $O$ to be the set of all open tuples. As there are at most $500n^3 /m$ blue hyperedges inside $A$, $|O|\geq |B|(|B|-1) - n^2m^{-1/4}$. 
Call a tuple $(a,b)$  \textit{good} for $v\in B$ if there exists at least $ \eps^{100} n^2$ tuples $(c,d), c,d\in A$ such that $(c,d)$ is \textit{reachable} from $(a,b)$ via $v$. Denote $Good(v)$ the set of all tuples being \textit{good} for $v$. 
For $v\in B$ \textit{special},  $|Good(v)|\geq \eps^{100} n^2$, because otherwise the number of $P_4$'s in $G_{v,B,b}$ would be at most $2\eps^{100} n^4$. However, since $e(G_{v,B,b})\geq \frac{1}{5} \eps^5 \binom{n}{2}$, the number of $P_4$'s in $G_{v,B}^{blue}$ is more than $2\eps^{100} n^4$.   

We will now walk along the red hyperedges step by step adding in each step 5 vertices to the tight path. Let $v_1,v_2,\ldots$ be the \textit{special} vertices in $B$. Let $(a_1,b_1) \in Good(v_1)$ and $(c_1,d_1)$ be an open tuple such that $(c_1,d_1)$ is \textit{reachable} from $(a_1,b_1)$ via $v_1$. We begin the walk with $a_1,b_1,v_1,c_1,d_1$. Now take a look at step $i$. Assume we already have defined $a_1,b_1,v_1,c_1,d_1,a_2,\ldots,a_{i-1},b_{i-1},v_{i-1},c_{i-1},d_{i-1}$ with $(c_{i-1},d_{i-1})$ being open. Pick $(a_i,b_i) \in Good(v_i)$ such that  $(a_i,b_i)$ is \textit{reachable} from $(c_{i-1},d_{i-1})$. For $i\leq nm^{-1/20}$, this is possible, because

$$ |Good(v_i)| -n^2m^{-1/4} - 5ni \geq \frac{\eps^{100}}{2} n^2.$$

Now pick $(c_{i},d_{i}) \in O$ such that $(c_{i},d_{i})$ is \textit{reachable} from $(a_{i},b_{i})$ via $v_1$. For $i\leq nm^{-1}$, this is possible, because

$$ \eps^{100} n^2 - n^2m^{-1/4} - 5ni \geq \frac{\eps^{100}}{2} n^2.$$

Now enlarge the path with $a_ib_iv_ic_id_i$. After $i\leq nm^{-1/20}$ steps we end up with a path of length $5nm^{-1/20}$ such that the last two vertices form an open tuple. Now, we just keep picking open tuples and walk from an open tuple to an open tuple. We can keep doing this until at most $nm^{-1/10}$ vertices are not used inside $A$. This means we found a tight red path of length at least

$$n-n^{\eps^4}- nm^{-1/10} + nm^{-1/20}\geq n.$$
\end{proof}

\begin{lemma} \label{decomp 2}
Let $V(\HH)=A \cup B \cup J'$ be the decomposition from Lemma~\ref{decomp}. If there are at most $nm^{-1/20}$ \textit{special} vertices in $A$ and $B$, then the vertex set $V(\HH)$ can be decomposed into $V(\HH)=A' \cup B' \cup J''$ with $|A'|,|B'|\geq n-\eps n, |J''|\leq \eps n$ such that $\HH[A']$ and $\HH[B']$ are entirely red, for all $v\in A'$ $e(G_{v,B'}^{red})\leq \frac{1}{5} \eps^5n^2$ and for all $v\in B'$ $e(G_{v,A'}^{red})\leq \frac{1}{5} \eps^5n^2.$

\end{lemma}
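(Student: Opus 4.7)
The plan is to obtain $A'$ and $B'$ simply by deleting the special vertices from $A$ and $B$ and absorbing them into the junk. Set $S_A$ and $S_B$ to be the sets of special vertices in $A$ and $B$ respectively, and put $A' := A\setminus S_A$, $B' := B\setminus S_B$, $J'' := J' \cup S_A \cup S_B$. The size bounds $|A'|,|B'|\geq n - \eps n$ and $|J''|\leq \eps n$ are immediate from the hypothesis $|S_A|,|S_B|\leq nm^{-1/20}$ together with the bound $|J'|\leq n^{\eps^3}$ supplied by Lemma~\ref{decomp}, for $n$ sufficiently large. The red-link bound on $A'$ is then automatic: by Definition~\ref{special}, any non-special $v\in A$ satisfies $e(G_{v,B}^{red})<\frac{1}{5}\eps^5\binom{n}{2}$, and since $B'\subseteq B$ we get $e(G_{v,B'}^{red})\leq \frac{1}{5}\eps^5 n^2$; the analogous statement for $B'$ holds for the same reason.

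The substantive part is showing that $\HH[A']$ is entirely red (and symmetrically $\HH[B']$). I would argue by contradiction: suppose $\{u,v,w\}\subseteq A'$ is a blue hyperedge and try to extend it to a blue copy of $\FF$ by choosing four additional vertices $x_1,x_2,x_3,x_4\in B'$. Since none of $u,v,w$ is special, each of the blue link graphs $G_{u,B}^{blue}, G_{v,B}^{blue}, G_{w,B}^{blue}$ misses at most $\frac{1}{5}\eps^5\binom{n}{2}$ pairs of $B$, and restricting to $B'$ loses at most $|S_B|\cdot|B|\leq n^2 m^{-1/20}$ further pairs per link, which is of lower order. Identifying $u,v,w$ with Fano vertices $\{1,2,3\}$ and $x_1,x_2,x_3,x_4$ with $\{4,5,6,7\}$ exactly as in the Fano plane produced at the end of the proof of Lemma~\ref{claimbluetri}, the remaining six edges of $\FF$ translate into requiring the three perfect matchings $\{x_1x_2,x_3x_4\}$, $\{x_1x_3,x_2x_4\}$, $\{x_1x_4,x_2x_3\}$ to lie in the blue links of $u,v,w$ restricted to $B'$. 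A routine union bound over ordered $4$-tuples of distinct vertices in $B'$ then shows that the number of such tuples where all six required pairs are blue is at least $c n^4 - O(\eps^5 n^4)>0$ for $\eps$ small, producing a blue $\FF$ and the desired contradiction. The argument for $\HH[B']$ is symmetric, completing each blue triple in $B'$ into a Fano plane using four vertices of $A'$.

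The obstacle I anticipate is the mismatch between the hypothesis and the target: Lemma~\ref{decomp} only guarantees $O(n^3/m)$ blue hyperedges inside $A$, and in general no choice of $\eps n$ vertices can be removed to kill that many blue triples (the independence number of a $3$-uniform hypergraph with that many edges is far too small). The resolution is that the non-special hypothesis is extremely strong: it forces the blue link of every surviving vertex into the opposite side to be nearly complete, so any surviving blue triple inside $A'$ can be absorbed into a blue Fano plane by an essentially generic $4$-tuple in $B'$. Thus the lemma is really saying that under these hypotheses \emph{no} blue triple can survive inside $A'$ or $B'$, not merely few.
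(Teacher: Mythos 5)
Your proposal is correct and follows essentially the same route as the paper: delete the special vertices into the junk set, observe the red-link bound is then automatic, and show any surviving blue triple in $A'$ extends to a blue Fano plane using four vertices of $B'$ because the three blue links are each nearly complete. The only (immaterial) difference is in the last step, where the paper finds a $K_4$ in the intersection of the three blue link graphs via Tur\'an's theorem rather than union-bounding over $4$-tuples realizing the six specific required pairs.
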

\begin{proof}
We can remove all \textit{special} vertices from $A$ and $B$ and add them to the junk set $J'$. So we obtain $A',B',J''$  so that for each $v\in A'$ the red link graph in $B'$ has at most $\frac{1}{5} \eps^5n^2$ edges, for each $w\in B'$ the red link graph of $w$ in $A'$ has at most $\frac{1}{5} \eps^5n^2$ edges and $|J''|\leq |J'|+2nm^{-1/20} \leq \eps n.$ 

\noindent
Suppose $abc$ is a blue hyperedge in $A'$. Let $G_{a,B'}^{blue},G_{b,B'}^{blue},G_{c,B'}^{blue}$ be the blue link graphs in $B'$. By the previous observation $e(G_{a,B'}^{blue} \cap G_{b,B'}^{blue} \cap G_{c,B'}^{blue}) \geq \frac{9}{10} \binom{n}{2}$ and thus $G_{a,B'}^{blue} \cap G_{b,B'}^{blue} \cap G_{c,B'}^{blue}$ contains a $K_4$. The four vertices from the $K_4$ together with $a,b,c$ contain a blue copy of the Fano plane in $\HH$. Hence $\HH[A']$ is entirely red. The same holds for $\HH[B']$. 
\end{proof}

\begin{lemma}\label{adding junk} 
In the setting of Lemma~\ref{decomp 2}, we can decompose $J''=J_1 \cup J_2$ such that for all $v\in J_1$ $e(G_{v,A'}^{blue}) \leq\eps n^2$  and all $v\in J_2$ $e(G_{v,B'}^{blue}) \leq\eps n^2$. 
\end{lemma}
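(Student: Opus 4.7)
The plan is to show that no vertex of $J''$ can have more than $\eps n^2$ blue edges to both $A'$ and $B'$ simultaneously. Once this is established, the decomposition is automatic: put every $v\in J''$ with $e(G_{v,A'}^{blue})\leq \eps n^2$ into $J_1$, and put all remaining vertices into $J_2$; then by the key claim, every $v\in J_2$ satisfies $e(G_{v,B'}^{blue})\leq \eps n^2$.

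For the key step I would argue by contradiction. Suppose some $v\in J''$ has $e(G_{v,A'}^{blue})>\eps n^2$ and $e(G_{v,B'}^{blue})>\eps n^2$. The strategy is to find a blue $\FF$ using the same seven-triple template already exploited twice in Section~3: on vertex set $\{v,w_1,w_2,a,b,c,d\}$ with lines
$$\{v,w_1,w_2\},\,\{v,a,b\},\,\{v,c,d\},\,\{w_1,a,d\},\,\{w_1,b,c\},\,\{w_2,a,c\},\,\{w_2,b,d\}.$$
First, pick any edge $w_1w_2$ of $G_{v,A'}^{blue}$; this uses only that $G_{v,A'}^{blue}$ has many edges and gives the line $\{v,w_1,w_2\}$ for free, with $w_1,w_2\in A'$. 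Then, to locate $a,b,c,d\in B'$, I would count ordered 4-tuples of distinct $(a,b,c,d)\in (B')^4$ with $ab,cd\in E(G_{v,B'}^{blue})$ and subtract the bad 4-tuples---namely those for which $ad$ or $bc$ lies in $G_{w_1,B'}^{red}$, or $ac$ or $bd$ lies in $G_{w_2,B'}^{red}$.

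The number of ``good'' 4-tuples is of order $\eps^2 n^4$, coming from $e(G_{v,B'}^{blue})>\eps n^2$ (after absorbing the lower-order correction for vertex collisions). On the other hand, Lemma~\ref{decomp 2} guarantees that each of the red link graphs $G_{w_1,B'}^{red}$ and $G_{w_2,B'}^{red}$ has at most $\tfrac{1}{5}\eps^5 n^2$ edges, so each of the four forbidden red pairs excludes at most $O(\eps^5 n^4)$ ordered 4-tuples. Since $\eps$ was chosen sufficiently small, $\eps^2 n^4$ dominates $O(\eps^5 n^4)$, hence a good 4-tuple exists and produces a blue copy of $\FF$, contradicting our coloring.

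I do not expect any serious obstacle here. Distinctness of the seven vertices is free since $v\in J''$, $w_1,w_2\in A'$, $a,b,c,d\in B'$ lie in pairwise disjoint sets; the verification that the seven triples form a Fano plane is identical to the one performed inside Lemmas~\ref{claimbluetri2} and \ref{claimbluetri}; and the size comparison $\eps^2 n^4\gg \eps^5 n^4$ uses only that $\eps$ is small. The only mildly delicate point is that one wants the ``good'' count to remain $\Omega(\eps^2 n^4)$ after insisting $a,b,c,d$ are distinct and disjoint from $\{w_1,w_2\}$, but this costs at most $O(n^3)$ tuples, which is negligible for large $n$.
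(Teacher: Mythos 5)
Your proof is correct. The decomposition step is fine: once you know no $v\in J''$ has more than $\eps n^2$ blue edges into both $A'$ and $B'$, the partition is immediate. The Fano template you use (lines $vw_1w_2$, $vab$, $vcd$, $w_1ad$, $w_1bc$, $w_2ac$, $w_2bd$) is indeed a Fano plane, the seven vertices are automatically distinct, and the quantitative comparison $\Omega(\eps^2 n^4)$ good tuples versus $O(\eps^5 n^4)+O(n^3)$ bad tuples goes through for $\eps$ small and $n$ large, using that $e(G_{w_i,B'}^{red})\leq \frac{1}{5}\eps^5 n^2$ from Lemma~\ref{decomp 2}. The paper's route is a mirror image of yours and proves a formally stronger dichotomy: if $e(G_{v,A'}^{blue})\geq \eps n^2$ then $v$ has \emph{no} blue hyperedge into $B'$ whatsoever. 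It takes a single blue hyperedge $abv$ with $a,b\in B'$, uses the near-completeness of the blue link graphs of $a$ and $b$ in $A'$ to get a dense graph $S$ of common blue pairs, and then extracts the four $A'$-vertices from a long path in $G_{v,A'}^{blue}$ intersected with $S$. Your version swaps which side contributes the pair and which contributes the 4-set, and replaces the path-plus-dense-graph extraction by the 4-tuple counting already carried out in Lemma~\ref{claimbluetri}; this makes your write-up slightly more uniform with the rest of Section 3, at the cost of needing the full $\eps n^2$ blue edges on the $B'$ side rather than just one blue hyperedge. Either intermediate claim suffices for the stated lemma.
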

\begin{proof}
Let $V(\HH)=A' \cup B' \cup J''$ be the decomposition from Lemma~\ref{decomp 2}. We actually will prove that if a rubbish vertex $v \in J''$ has $e(G_{v,A'}^{blue})\geq \eps n^2$, then it cannot have a blue hyperedge into $B'$. Indeed suppose there are $a,b\in B'$ with $abv$ blue. Then both $a$ and $b$ are part of at least $\binom{|A'|}{2}-\frac{1}{5} \eps^5n^2$ blue hyperedges with the other two vertices being in $A'$. Therefore there are at least $\binom{
|A'|}{2}-\frac{2}{5}\eps^5n^2$ pairs $(c,d)$ with $c,d\in A'$ and $cda,cdb$ blue, call this set of edges $S$. Now suppose $v$ leads at least $\eps n^2$ blue hyperedges into $A'$, i.e.\ $e(G_{v,A'}^{blue})\geq \eps n^2$ and hence $G_{v,A'}^{blue}$ contains a path $P$ of length $\eps^2 n$. The restriction of $S$ onto the vertex set of $P=\{p_1,p_2,\ldots\}$ contains at least $(1-\eps)\binom{|P|}{2}$ edges and hence contains four vertices $p_i,p_{i+1},p_j,p_{j+1}$ with $p_ip_j,p_ip_{j+1},p_{i+1}p_j,p_{i+1}p_{j+1}$ in $S$. Then $ap_ip_{j+1},avb,ap_jp_{i+1},p_{j+1}bp_{i+1},p_{j+1}vp_j, p_ivp_{i+1},p_ip_jb$ form a blue Fano plane. Hence we can split up the junk set $J''=J_1 \cup J_2$ such that for all $v\in J_1$ $e(G_{v,A'}^{blue}) \leq\eps n^2$  and all $v\in J_2$ $e(G_{v,B'}^{blue}) \leq\eps n^2$.
\end{proof}

\begin{lemma}\label{adding junk2} 
In the setting of Lemma~\ref{decomp 2}, we can find a red $\Ti$ in $\HH$. 
\end{lemma}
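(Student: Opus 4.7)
The plan is to build a red tight path on $n$ vertices using essentially all of one of $A'$ or $B'$, and to ``absorb'' the few missing vertices by inserting a small number of vertices from $J_1$ or $J_2$ whose blue link graph into $A'$ (resp.~$B'$) is very sparse.

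First, I would observe that at least one of $|A'|+|J_1|$ and $|B'|+|J_2|$ is at least $n$: since $J_1\cup J_2=J''$, the sum of these two quantities is at least $|A'|+|B'|+|J''|=2n-1$, so they cannot both be $\leq n-1$. Assume w.l.o.g.\ that $|A'|+|J_1|\geq n$. The subcase $|A'|\geq n$ is immediate: any $n$ vertices of $A'$ form a red $\Ti$ since $\HH[A']$ is entirely red. So I may assume $k:=n-|A'|$ satisfies $1\leq k\leq \eps n$ and $|J_1|\geq k$; fix any $k$ vertices $v_1,\dots,v_k\in J_1$.

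The key step is an absorption argument. For each $v\in J_1$, call an ordered $4$-tuple $(a^1,a^2,a^3,a^4)$ of distinct vertices of $A'$ an \emph{absorber} for $v$ if the three hyperedges $\{a^1,a^2,v\}$, $\{a^2,v,a^3\}$, $\{v,a^3,a^4\}$ are all red; then $(a^1,a^2,v,a^3,a^4)$ is a red tight path on $5$ vertices with $v$ in the middle position. The bound $e(G_{v,A'}^{blue})\leq \eps n^2$ from Lemma~\ref{adding junk} implies that each of these three triples fails to be red for at most $2\eps n^2\cdot n^2$ ordered tuples, so at least $(1-O(\eps))n^4$ of the ordered $4$-tuples are absorbers for $v$. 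I would then greedily select pairwise vertex-disjoint absorbers $S_{v_1},\dots,S_{v_k}\subset A'$ for $v_1,\dots,v_k$: at step $i$, at most $4(i-1)\leq 4\eps n$ vertices of $A'$ have already been used, excluding at most $O(\eps)n^4$ tuples, so a disjoint absorber for $v_i$ still exists.

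Finally, writing $S_{v_i}=(a_i^1,a_i^2,a_i^3,a_i^4)$ and letting $r_1,\dots,r_{|A'|-4k}$ be an arbitrary enumeration of the unused vertices of $A'$, I would take the sequence
$$a_1^1,a_1^2,v_1,a_1^3,a_1^4,\ a_2^1,a_2^2,v_2,a_2^3,a_2^4,\ \dots,\ a_k^1,a_k^2,v_k,a_k^3,a_k^4,\ r_1,\dots,r_{|A'|-4k},$$
of total length $5k+(|A'|-4k)=n$. Every triple of consecutive entries in this sequence is either one of the three absorber triples for some $v_i$ (red by the definition of an absorber) or has all three vertices in $A'$ (red because $\HH[A']$ is entirely red). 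In particular, the straddling triples $\{a_i^3,a_i^4,a_{i+1}^1\}$ and $\{a_i^4,a_{i+1}^1,a_{i+1}^2\}$ at each junction, and the tail triples $\{a_k^3,a_k^4,r_1\}$ and $\{a_k^4,r_1,r_2\}$, all fall into the second case. This yields a red copy of $\Ti$ in $\HH$, contradicting our standing assumption, which completes the proof of Theorem~\ref{mainthm} in the two-paths case.

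The main obstacle is in fact very mild once Lemmas~\ref{decomp 2} and~\ref{adding junk} are in place: the blue link degrees of $J_1$-vertices into $A'$ are so small that a simple greedy absorption works with a huge margin, and the tight-path structure cooperates because every triple at a junction between adjacent absorber blocks is automatically contained in $A'$. The substantive work has already been done in the earlier lemmas which purified the colouring into the almost-monochromatic picture on $A'\cup B'$ and controlled the junk set.
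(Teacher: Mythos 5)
Your proposal is correct and follows essentially the same route as the paper: pick the side with $|A'|+|J_1|\geq n$ and greedily absorb the junk vertices into a red tight path through $A'$, using that $\HH[A']$ is entirely red and that each $v\in J_1$ has at most $\eps n^2$ blue link edges into $A'$. The only (cosmetic) difference is that you use length-$5$ absorber blocks with $v$ in the middle so that all junction triples lie inside $A'$, whereas the paper interleaves triples $a_ib_iv_i$ and checks the connecting triples involving $v_{i-1}$ directly; both verifications go through with the same margins.
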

\begin{proof}
Let $J''=J_1 \cup J_2$ be the decomposition of the junk vertices from Lemma~\ref{adding junk}. Set $A^*=A' \cup J_1$ and $B^*=B' \cup J_2$. Then either $|A^*|\geq n$ or $|B^*|\geq n$. W.l.o.g. $|A^*|\geq n$. Now one can find a red tight path of length $n$ inside $A^*$. Let $v_1,v_2,\ldots$ be the vertices from $J_1$. Call a tuple $(a,b),a,b,\in A'$ \textit{good*} for $v\in J_1$ if the red link graph of $v$ in $A'$ contains at least $\frac{9}{10}n^2$ tuples $(c,d), c,d\in A'$ such that $abv,bvc,vcd$ are red in $\HH$. Since the blue link graph of $v$ contains at most $\eps n^2$ edges, for each $v\in J_1$ the number of \textit{good*} tuples is at least $\frac{9}{10}n^2$. Now start with an arbitrary \textit{good*} tuple $(a_1,b_1)$ for $v_1$. The start of the walk is $a_1,b_1,v_1$. Now assume we already have chosen $a_1,b_1,v_1,\ldots,a_{i-1},b_{i-1},v_{i-1}$ such that $(a_{i-1},b_{i-1})$ is \textit{good*} for $v_{i-1}$. Take a \textit{good*} tuple $(a_i,b_i)$ for $v_i$ of unused vertices such that $b_{i-1}v_{i-1}a_i, v_{i-1}a_ib_i$ are red. This is possible for all $i\leq \eps n$, because

$$  \frac{9}{10}n^2 - \frac{1}{10}n^2 - 3in >0. $$

Enlarge the path by $a_ib_iv_i$. After all vertices from $J_1$ are used, just walk through $A'$ until all vertices in $A'$ are used. This is possible, because all hyperedges inside $A'$ are red. Thus, we find a red tight path of length $|A^*|\geq n$. 
\end{proof}

\subsection{The three paths case}

Let $A,B,C$ with $A=\{A_1,A_2,\ldots, A_a\}, B=\{B_1,\ldots,B_b\}$ and $C=\{C_1,\ldots,C_c\}$ be the three paths in $G_1$. There cannot be an edge between blobs of different paths. Otherwise one can split up each blob in two blobs of equal size (if $m$ is odd one vertex ends up in $J$) in such a way that when one constructs the graph of all new blobs, where two blobs are connected with each other when they have at least 100 disjoint red butterflies between them, and then builds the longest paths that one can handle this situation in the same way as in case $p=2$. Hence instead of three paths we shall have two paths and we handled this already in Subsection \ref{2paths}.

Using Lemma~\ref{claimbluetri} it follows that w.l.o.g.  $|\overrightarrow{A_iB_j}|_r,|\overrightarrow{B_jC_k}|,|\overrightarrow{C_kA_i}|_r \leq m^{3-1/t}$ for all $i,j,k$. Let $P_1=\cup A_i , P_2= \cup B_i$ and $P_3= \cup C_i$. Clearly, $|P_1|,|P_2|,|P_3| <n$ as otherwise one could find a red tight path of length $n$ just by going through a blob and then jumping to the next by using a red butterfly and so on. 

\begin{definition}\label{gra}For $X,Y \subset V(\HH)$ and $0<x<1$, denote $G(X,Y,x)$ the graph with vertex set $X$, and $ab$ is an edge iff the number of red hyperedges $abc$ with $c$ from $Y$ is at least $x|Y|$.
\end{definition}

\begin{lemma}\label{equal size}
There exists a constant $C'$ such that 
$$\frac{2}{3}n -C'nm^{-1/t} \leq |P_i|\leq \frac{2}{3}n +C'nm^{-1/t}$$ 
\noindent
for $i=1,2,3$. 
\end{lemma}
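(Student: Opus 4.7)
My plan is to prove the upper bound $|P_i| \leq \tfrac{2}{3}n + C'nm^{-1/t}$ by contradiction; the matching lower bound then follows from the identity $|P_1|+|P_2|+|P_3| = 2n-1-|J|$ with $|J| \leq n^{\eps^4}$, which gives $|P_i| \geq \tfrac{2}{3}n - (2C'+1)nm^{-1/t}$ for large $n$ (after enlarging the final constant appropriately).

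Recall the (w.l.o.g.) cyclic orientation $P_1 \to P_2 \to P_3 \to P_1$ from the discussion preceding the lemma: by Lemma~\ref{claimbluetri}, for each $i$ (mod $3$) and every $P_i$-blob, $P_{i+1}$-blob pair, triples with two vertices in the $P_i$-blob and one in the $P_{i+1}$-blob are overwhelmingly red, with at most $O(n^2 m^{1-1/t})$ blue exceptions in total. Inside each $P_i$ we already have a red tight path on all $|P_i|$ vertices using the intra-path butterflies and the red blobs. The key construction I want to use is an \emph{absorbing} red tight path: walk through $P_i$ as a spine, and insert isolated vertices from $P_{i+1}$ in the pattern $\ldots aab\,aab\,aab\ldots$, so that each inserted vertex is flanked by two $P_i$-vertices on each side and all three triples containing it are of type ``$2$ in $P_i$, $1$ in $P_{i+1}$''. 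A standard greedy/counting argument using the bad-triple bound above produces such a red tight path of length at least $\min\!\bigl\{\tfrac{3}{2}|P_i|,\,|P_i|+|P_{i+1}|\bigr\} - C'nm^{-1/t}$.

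Now suppose for contradiction $|P_1| > \tfrac{2}{3}n + C'nm^{-1/t}$ for a sufficiently large $C'$; the cases of $|P_2|$ and $|P_3|$ follow by cyclic relabelling of the three paths. I split into three subcases:
\begin{itemize}
\item[(a)] If $|P_2| \geq \tfrac{1}{2}|P_1|+1$, then $P_1$-spine absorbing from $P_2$ yields a red $\Ti$ of length at least $\tfrac{3}{2}|P_1| - C'nm^{-1/t} > n$, contradiction.
\item[(b)] If $|P_2| < \tfrac{1}{2}|P_1|+1$ and $|P_1|+|P_2| \geq n$, the same spine absorbing all of $P_2$ gives length at least $|P_1|+|P_2|-C'nm^{-1/t} \geq n$, contradiction.
\item[(c)] If $|P_2| < \tfrac{1}{2}|P_1|+1$ and $|P_1|+|P_2| < n$, then $|P_3| = 2n-1-|J|-|P_1|-|P_2| > n - 2n^{\eps^4}$. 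Using $P_3$ as spine and $P_1$ as absorber (via the cyclic direction ``$2$ in $P_3$, $1$ in $P_1$''; note $|P_1| > \tfrac{2}{3}n > |P_3|/2$ so enough absorbers are available) yields a red $\Ti$ of length at least $\tfrac{3}{2}|P_3| - C'nm^{-1/t} > n$, contradiction.
\end{itemize}
In every case we produce a red $\Ti$ in $\HH$, contradicting the standing assumption of Section~4. Hence $|P_1| \leq \tfrac{2}{3}n + C'nm^{-1/t}$, and by symmetry the same holds for $|P_2|, |P_3|$.

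The main obstacle I anticipate is rigorously executing the absorption construction: one must walk through $P_i$ via the intra-path blob-to-blob butterflies while simultaneously inserting $\Theta(|P_i|)$ disjoint absorbers from $P_{i+1}$ and respecting the tight-path edge condition, losing only $O(nm^{-1/t})$ slots to ``bad'' triples. This is a standard greedy/counting argument using the global bad-triple bound from Lemma~\ref{claimbluetri} together with the linearity $|P_i|, |P_{i+1}| = \Theta(n)$, and parallels in spirit the absorption arguments of Lemmas~\ref{embedd} and~\ref{adding junk2} from the two-path case in Section~\ref{2paths}.
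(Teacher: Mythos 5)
Your plan follows the paper's proof of this lemma essentially step for step: the same two absorption bounds (a $P_i$-spine swallowing $P_{i+1}$-vertices in the pattern $aab$, giving roughly $\tfrac32|P_i|$ when absorbers are plentiful and $|P_i|$ plus the absorbed amount otherwise), the same contradiction with the no-red-$\Ti$ assumption via a case analysis on block sizes, and the same derivation of the lower bound from $|P_1|+|P_2|+|P_3|=2n-1-|J|$. Cases (a) and (c) check out: in (c), $|P_3|>n-2n^{\eps^4}$ makes $\tfrac32|P_3|-C'nm^{-1/t}$ exceed $n$ by $\Theta(n)$, and $|P_1|>\tfrac23 n>|P_3|/2$ does supply enough absorbers in the correct cyclic direction ($2$ in $P_3$, $1$ in $P_1$).

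There are two quantitative slips in case (b), both repairable. First, from $|P_1|+|P_2|\ge n$ you only obtain a path of length at least $n-C'nm^{-1/t}$, not $n$; the threshold must sit at $n+C'nm^{-1/t}$ (harmless, since case (c) has $\Theta(n)$ slack and absorbs the shifted complement). Second, the bound $|P_1|+|P_2|-C'nm^{-1/t}$ presumes you can absorb \emph{all} of $P_2$. The per-blob greedy that makes this rigorous (a Hamiltonian path in $G(A_i,B_j,0.99)$ plus insertion into every second slot) exhausts the common good neighbourhood of a $B$-blob before all $m$ of its vertices are placed, so with a blob-to-blob pairing you fall a constant fraction of $|P_2|$ short unless you additionally argue that each $B$-blob's vertices can be spread across the slots of several $A$-blobs. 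The paper sidesteps this entirely: it only ever claims $|P_1|+\tfrac12|P_2|-5000nm^{-1/t}$ (each $A$-blob absorbs half of its index-partner $B$-blob) and chooses the case threshold $|P_2|\ge 2(n-|P_1|)+10000nm^{-1/t}$ so that half of $P_2$ already suffices. Adopting that weaker absorption bound and threshold is the cleanest repair of your case (b); otherwise your plan is the paper's argument.
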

\begin{proof}
Let $A_i\in A, B_i \in B$. We now will show that one can find a red tight path of length at least $3m/2-4000m^{1-1/t}$ just using $A_i$ and $B_i$ starting with two vertices from $A_i$, ending with two vertices from $A_{i}$, not using two vertices being part of a butterfly to $A_{i-1}$ and not using two other vertices being part of a butterfly to $A_{i+1}$. Consider the graph $G(A_i,B_i,0.99)$. The number of vertices in this graph with degree at most $0.9m$ is at most $2000m^{1-1/t}$ as otherwise $|\overrightarrow{B_iA_i}|_r > 2000m^{1-1/t} \cdot 0.1m \cdot 0.01m \cdot 0.5 =m^{3-1/t}$.
Let $A'\subseteq A_i$ be the set of all vertices of degree at least $0.9m$ not containing the two vertices being part of a butterfly to $A_{i-1}$ and not containing the two vertices being part of a butterfly to $A_{i+1}$. Then $|A'|\geq m- 2000m^{1-1/t}-4$, $G(A',B_i,0.99)$ has minimum degree at least $0.8m$ and thus there exists a Hamiltonian path $v_1,v_2,\ldots,v_{A'}$ in this graph. After every second vertex in this path we will now add a vertex from $B_i$ to find a long red tight path in $\HH$. Assume we already found the tight red path $v_1,v_2,w_1,v_3,v_4,w_2,\ldots,v_{2i-1},v_{2i}$. Then we can pick a vertex $w_i\in B_i$  which has not been used yet and such that $v_{2i-1}v_{2i}w_i,v_{2i}w_i,v_{2i+1},w_i,v_{2i+1}v_{2i+1}$ are red for $i< |A'|/2$. This is possible because $m-0.01m-0.01m-0.01m-i>0$. Thus, we can find a red tight path of length at least $3/2 (m-2001m^{1-1/t})\geq 3m/2-4000m^{1-1/t}$. If $|P_1|\leq |P_2|$, then we can find a tight red path of length at least $3/2 |P_1|-5000nm^{-1/t}$ by the following argument. Jump from blob to blob in $A$ using the vertices from the butterflies and always absorbing the vertices from the index corresponding blob in $B$.
When we are done with all blobs in $A$ we stop and have found a red tight path of length at least 

$$ \left(\frac{3}{2}m - 4000m^{1-1/t} \right) \left\lfloor \frac{|P_1|}{m} \right\rfloor \geq \left(\frac{3}{2}m - 4000m^{1-1/t} \right) \left( \frac{|P_1|}{m} -1\right) \geq \frac{3}{2}|P_1| - 5000nm^{-1/t}.  $$

If $|P_1|\geq |P_2|$, then we can find a tight red path of length at least $ (|P_1|+ 1/2|P_2|)-5000nm^{-1/t}$ by the following argument. Jump from blob to blob in $A$ using the vertices from the butterflies and always absorbing the vertices from the index-corresponding blob in $B$.
When we are done with all blobs in $B$ we go back to $A$ and walk through the remaining blobs from $A$ using the butterflies. So we can find a tight red path of length at least

$$ \left(\frac{3}{2}m - 4000m^{1-1/t} \right)   \left\lfloor\frac{|P_2|}{m} \right\rfloor + |P_1|-|P_2| \geq |P_1|+\frac{|P_2|}{2}-5000nm^{-1/t}. $$

We will now show that the sizes of the blocks $P_1,P_2,P_3$ is at most $2/3n+C'nm^{-1/t}$ and at least $2/3n-C'nm^{-1/t}$ for an absolute constant $C'$. W.l.o.g. let $P_1$ be a biggest block. If $|P_1|\leq 2/3n+30000nm^{-1/t}$, then also $|P_2|,|P_3| \leq 2/3n+30000nm^{-1/t}$, but since $|P_1|+|P_2|+|P_3|= 2n-1-|J|,$ we also get $|P_1|,|P_2|,|P_3| \geq 2/3n-60001nm^{-1/t}$. Assume $|P_1|\geq 2/3n+30000nm^{-1/t}$. If $|P_2|\geq 2(n-|P_1|)+10000nm^{-1/t}$, then we find a red tight path of length at least  $$(|P_1|+ \frac{1}{2}|P_2|)-5000nm^{-1/t}\geq n+5000nm^{-1/t}.$$ Otherwise,  $|P_2|\leq 2(n-|P_1|)+10000nm^{-1/t}$. Then, $|P_1|+|P_2| \leq 2n-|P_1|+10000nm^{-1/t}$ and thus $|P_3|\geq |P_1|-10001nm^{-1/t}\geq 2/3n +19999nm^{-1/t}.$ 
But now we can find a red tight path of length at least $\frac{3}{2}|P_1| - 5000nm^{-1/t}>n$. This shows that there exists a constant $C'$ such that 
$$\frac{2}{3}n -C'nm^{-1/t} \leq |P_i|\leq \frac{2}{3}n +C'nm^{-1/t}$$
\noindent
for $i=1,2,3$. 
\end{proof}

\begin{lemma}\label{entirely red}
$V(\HH)$ can be decomposed as $V(\HH)= P_1'' \cup P_2''\cup P_3''\cup J$ such that $|P_1''|=|P_2''|=|P_3''|$; $|J|\leq C''nm^{-1/t}$ for some constant $C''>0$ and each of $\HH[P_1''],\HH[P_2''],\HH[P_3'']$ is entirely red. 
\end{lemma}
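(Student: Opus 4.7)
The plan is to start with the partition $V(\HH) = P_1 \cup P_2 \cup P_3 \cup J$ from the three-paths setup and refine it by moving a small collection of ``bad'' vertices into $J$. First, by Lemma~\ref{equal size}, the three parts have sizes $\tfrac{2n}{3} \pm O(nm^{-1/t})$, so I would move $O(nm^{-1/t})$ vertices from the larger parts into $J$ to make the $|P_i|$ equal up to a constant, performing one final small adjustment at the end of the argument.

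The key step is identifying the bad vertices. For each $v \in P_1$ and each blob $B_j$ of $P_2$, call $B_j$ \emph{bad for $v$} if the red link graph of $v$ inside $B_j$ has more than $\eps m^2$ edges, for a small absolute constant $\eps > 0$ (say $\eps = 1/30$). Since Lemma~\ref{claimbluetri} gives $|\overrightarrow{A_iB_j}|_r \leq m^{3-1/t}$, Markov's inequality shows that within each $A_i$ at most $m^{1-1/t}/\eps$ vertices have $B_j$ as a bad blob, so summing over all blob pairs the total number of (vertex, bad blob) incidences in $P_1$ is $O(n^2 m^{-1-1/t})$. A second application of Markov gives that at most $O(nm^{-1/t})$ vertices of $P_1$ have at least $1/6$ of the blobs of $P_2$ bad; call these \emph{bad vertices}, move them to $J$, and repeat symmetrically for $P_2$ (with respect to $P_3$) and $P_3$ (with respect to $P_1$).

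Let $P_1'', P_2'', P_3''$ denote the remaining parts. I would now verify that each $\HH[P_i'']$ is entirely red. Otherwise, suppose $abc$ is a blue hyperedge in $\HH[P_1'']$; since none of $a, b, c$ is bad, at least half of the blobs of $P_2$ are non-bad for all three simultaneously, and we may fix such a blob $B_j$. Each of the link graphs $G_{a, B_j}^{blue}, G_{b, B_j}^{blue}, G_{c, B_j}^{blue}$ then has density at least $1 - 2\eps - o(1)$, so their intersection has density more than $2/3$. By Tur\'an's theorem it contains a copy of $K_4$ on some $\{d, e, f, g\}$, and the seven hyperedges $abc, ade, afg, bdf, beg, cdg, cef$ form a blue copy of $\FF$ in $\HH$, contradicting our coloring. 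A final constant-size adjustment restores exact equality of the three parts while keeping $|J| = O(nm^{-1/t})$. The main obstacle is that Lemma~\ref{claimbluetri} does not directly control red hyperedges whose two-vertex side spans two different blobs of $P_2$, because by the no-$K_4$-in-complement property of $G_1$ (Lemma~\ref{K_4 lemma}) any two blobs in the same path are adjacent in $G_1$, so Lemma~\ref{claimbluetri} cannot be applied to them together with a blob from $P_1$; the plan circumvents this by restricting the definition of ``bad'' to same-blob pairs and finding the Fano plane with all four extra vertices inside a single good blob.
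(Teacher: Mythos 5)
Your proof is correct and takes essentially the same approach as the paper: remove the few vertices whose red link into a blob of the opposite block is large (via $|\overrightarrow{A_iB_j}|_r \leq m^{3-1/t}$ and Markov), then for any surviving blue triple find a $K_4$ in the common blue link graph inside a single good blob and assemble a blue Fano plane. The paper streamlines your majority-of-blobs bookkeeping by fixing one reference blob $B_1$ for all of $P_1$ (and likewise for $P_2$, $P_3$), but this is only a cosmetic difference.
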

\begin{proof}
The number of vertices $v \in A_1$ with $e(G_{v,B_1}^{blue})\leq 29/30 \binom{m}{2}$ is at most $30m^{1-1/t}$ as otherwise $|\overrightarrow{A_1B_1}|_r\geq m^{3-1/t}$. This means one can move at most $30m^{1-1/t} (n/m) \leq 30nm^{-1/t}$ vertices $v$ from $P_1$ to $J$ (and obtain $P_1' \subseteq P_1$) such that all vertices in $P_1'$ satisfy $e(G_{v,B_1}^{blue})\geq 29/30 \binom{m}{2}$. Now assume there is a blue hyperedge $v_1,v_2,v_3$ inside $P_1'$. Since $e(G_{v_1,B_1}^{blue} \cap G_{v_2,B_1}^{blue} \cap G_{v_3,B_1}^{blue}) \geq 27/30 \binom{m}{2}$, $G_{v_1,B_1}^{blue} \cap G_{v_2,B_1}^{blue} \cap G_{v_3,B_1}^{blue}$ contains a $K_4$. These 4 vertices together with $v_1,v_2,v_3$ form a blue Fano plane. Thus $P_1'$ is entirely red. Repeating this cleaning procedure for $P_2$ and $P_3$ one ends up with entire red blocks $P_1',P_2',P_3'$ and a rubbish set $J'$ of size at most $100nm^{-1/t}$. Considering that the blocks had roughly equal size, we can remove a few more vertices from the blocks and end up with entirely red blocks $P_1'',P_2'',P_3''$ of equal size and a rubbish set $J''$ of size at most $C''nm^{-1/t}$ vertices with $C''$ being an absolute constant.     
\end{proof}

\begin{lemma}\label{structure}
Let $V(\HH)= P_1'' \cup P_2''\cup P_3''\cup J$ be the decomposition from Lemma~\ref{entirely red}. Then  $|P_1''P_2''P_3''|_r \leq 7n^{3-1/t}$.
\end{lemma}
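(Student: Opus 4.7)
\noindent\textit{Proof plan.}
I would proceed by contradiction: assume $|P_1''P_2''P_3''|_r > 7 n^{3-1/t}$ and derive a blue copy of $\FF$ in $\HH$, contradicting the standing no-blue-$\FF$ assumption. The strategy parallels Lemma~\ref{claimbluetri}: first extract a structured dense red region via supersaturation and the K\"ov\'ari--S\'os--Tur\'an-type Theorem~\ref{finding directed}, then locate a blue $\FF$ using the rare-blue bounds of Lemma~\ref{claimbluetri}.

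\smallskip
\noindent\emph{Step 1: supersaturation.} Writing
\[
|P_1''P_2''P_3''|_r = \sum_{(b,c) \in P_2''\times P_3''} f(b,c), \qquad f(b,c) := |\{a \in P_1'' : abc \text{ is red}\}|,
\]
Cauchy--Schwarz gives $\sum_{b,c} f(b,c)^2 \geq 49\, n^{4-2/t}$. A standard pigeonhole then yields at least $3 n^{2-1/t}$ pairs $(b,c)$ with $f(b,c) \geq 9 n^{1-1/t}$. Viewing these pairs as the edges of a dense bipartite graph on $P_2'' \cup P_3''$ and applying Theorem~\ref{finding directed} (or its undirected analogue), I obtain subsets $V \subseteq P_2''$, $W \subseteq P_3''$ of size $t$ with $f(b,c) \geq 9 n^{1-1/t}$ for every $(b,c) \in V \times W$. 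A further tripartite Erd\H{o}s-type KST extraction yields a red complete tripartite subhypergraph $K_{4,4,4}^{(3)}$ on $U \cup V' \cup W'$ with $U \subseteq P_1''$, $V'\subseteq V$, $W'\subseteq W$, each of size four.

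\smallskip
\noindent\emph{Step 2: locating a blue $\FF$.} The crucial input from Lemma~\ref{claimbluetri} is that the three ``anti-cyclic'' blue $1$-$2$-partite directions $|\overrightarrow{B_jA_i}|_b$, $|\overrightarrow{C_kB_j}|_b$, $|\overrightarrow{A_iC_k}|_b$ are each $\leq m^{3-1/t}$ per blob-pair and thus $O(n^2 m^{1-1/t})$ in aggregate, while the three ``cyclic'' blue $1$-$2$-partite directions together with the blue tripartite triples (those not counted in $|P_1''P_2''P_3''|_r$) supply $\Theta(n^3)$ hyperedges. Fix a $3$-coloring of $\FF$ with class-size signature $(3,2,2)$ such that, after an appropriate transposition of classes, its seven edges split as four cyclic-type, two rainbow $(1,1,1)$-type, and exactly one anti-cyclic-type edge. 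I embed the coloring greedily into $P_1''\cup P_2''\cup P_3''$: the cyclic and rainbow edges draw from the abundant common-blue supplies (the rainbow edges, in particular, avoiding the red core $U \cup V' \cup W'$ where $(1,1,1)$-triples are all red), and the single anti-cyclic edge is chosen from the $\Omega(n^2)$ available blue anti-cyclic hyperedges. Since at each step the number of ``bad'' (red or rare-blue) hyperedges is of strictly smaller order than the number of available good ones, the greedy choice succeeds and produces a blue $\FF$, a contradiction.

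\smallskip
\noindent\emph{Main obstacle.} The heart of the argument is verifying that a Fano $3$-coloring with at most one anti-cyclic edge exists, and that the greedy embedding respects the interlocking incidence structure of $\FF$. A short case analysis over the three valid class-size signatures $(4,2,1)$, $(3,3,1)$, $(3,2,2)$ and the six permutations of class assignments to $(P_1'',P_2'',P_3'')$ identifies the $(3,2,2)$ signature under a suitable transposition as the unique viable choice: no $3$-coloring of $\FF$ avoids anti-cyclic edges entirely (a consequence of the interplay between Fano lines and any $3$-partition into color classes), but one anti-cyclic edge can be isolated. Coordinating the seven greedy selections while respecting Fano's seven triply-intersecting lines is where the constant $7$ in the statement appears through explicit bookkeeping.
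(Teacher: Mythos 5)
There is a genuine gap here, and it starts with the choice of target for the contradiction. Your Step 2 tries to produce a blue $\FF$, but the only quantitative input you actually use is the abundance of blue hyperedges of the rainbow type and of the three ``cyclic'' $(1,2)$-types, plus an asserted supply of $\Omega(n^2)$ blue \emph{anti-cyclic} hyperedges. That last assertion has no source: Lemma~\ref{claimbluetri} gives only the \emph{upper} bound $|\overrightarrow{P_2''P_1''}|_b \leq n^{3-1/t}$ on the anti-cyclic blue triples, and nothing prevents there being zero of them. Worse, the sharpness construction of Section~2 (three sets with the cyclic red pattern) satisfies every abundance condition you do use --- all rainbow triples and all three cyclic $(1,2)$-directions are entirely blue there --- and yet contains no blue $\FF$. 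Since your red $K_{4,4,4}^{(3)}$ from Step 1 is extracted only to be ``avoided'' in Step 2, the hypothesis $|P_1''P_2''P_3''|_r > 7n^{3-1/t}$ is never genuinely used; if the greedy embedding of Step 2 succeeded, it would find a blue $\FF$ in that extremal coloring, which is false. So the approach as written cannot be repaired by bookkeeping: no distribution of Fano lines over the available blue types works without the missing anti-cyclic edges.

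The paper instead contradicts the other standing assumption, namely that $\HH$ has no red $\Ti$, and this is where the density of red rainbow triples earns its keep. One picks $v_1,w_1\in P_1''$, $v_2,w_2,x_2\in P_2''$, $v_3\in P_3''$ uniformly at random; the bounds $|\overrightarrow{P_2''P_1''}|_b,\, |\overrightarrow{P_3''P_2''}|_b \leq n^{3-1/t}$ make each of $v_1w_1v_2$, $v_2v_3w_2$, $v_3w_2x_2$ red with probability $1-O(n^{-1/t})$, while the hypothesis $|P_1''P_2''P_3''|_r \geq 7n^{3-1/t}$ makes $v_1v_2v_3$ red with probability at least about $7n^{-1/t}$, enough for a union bound to produce a choice with all four hyperedges red. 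This yields a short red connector $w_1v_1v_2v_3w_2x_2$ joining $P_1''$ to $P_2''$, and since both blocks are entirely red (Lemma~\ref{entirely red}) one obtains a red tight path of length $|P_1''|+|P_2''|+1\geq n$, the desired contradiction. If you want to salvage your write-up, replace Step 2 by this connector argument; your supersaturation in Step 1 is then unnecessary, since a single good red rainbow triple (with the right red neighbours) suffices.
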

\begin{proof}
 Applying Lemma~\ref{claimbluetri} gives w.l.o.g. that    $$|\overrightarrow{P_1''P_2''}|_r ,|\overrightarrow{P_2''P_3''}|_r,|\overrightarrow{P_3''P_1''}|_r, |\overrightarrow{P_2''P_1''}|_b ,|\overrightarrow{P_3''P_2''}|_b,|\overrightarrow{P_1''P_3''}|_b \leq n^{3-1/t}.$$

 Assume $|P_1''P_2''P_3''|_r \geq 7n^{3-1/t}$. Pick $v_1,w_1\in P_1'',v_2,w_2,x_2\in P_2'', v_3 \in P_3''$ uniformly at random. The hyperedge $v_1w_1v_2$ is blue or not a proper hyperedge with probability at most $2n^{-1/t}$. Similarly, $v_2w_2v_3$ and $v_3w_2x_2$ is blue or not an hyperedge each with probability at most $2n^{-1/t}$. The hyperedge $v_1v_2v_3$ is blue with probability at most $1-7n^{3-1/t}$. Thus, the probability that one of the hyperedges $v_1w_1v_2,v_1v_2v_3,v_2v_3w_2,v_3w_2x_2$ is blue is at most $1-7n^{3-1/t}+6n^{3-1/t}<1$. Thus, there exists $v_1,v_2,v_3,w_1,w_2,w_3$ such that all the hyperedges $v_1w_1v_2,v_1v_2v_3,v_2v_3w_2,v_3w_2x_2$  are red. Now one can find a red tight path of length at least $|P_1''|+|P_2''|+1\geq n$ by first going through all vertices in $P_1''$ besides $v_1$ and $w_1$, then going along $w_1v_1v_2v_3w_1w_2$ and then through all vertices in $P_2''$. Recall that all hyperedges inside $P_1'', P_2''$ or $P_3''$ are red.  
\end{proof}

\begin{lemma}\label{structure2}
There exists a decomposition of the vertices of $\HH$ into $V(\HH)= P_1^\dagger \cup P_2^\dagger \cup P_3^\dagger$ with $ 0.66 \leq |P_i^\dagger| \leq 0.67$ for $i=1,2,3$ such that all graphs $G(P_1^\dagger,P_2^\dagger,0.98)$, $G(P_2^\dagger,P_3^\dagger,0.98)$ and $G(P_3^\dagger,P_1^\dagger,0.98)$ have minimum degree at least $0.39n$. 
\end{lemma}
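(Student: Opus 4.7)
The plan is to refine the partition from Lemma~\ref{entirely red} by cleaning a small number of atypical vertices from each $P_i''$ and then redistributing everything to get a three-part partition of the required sizes with the min-degree property.

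I would begin by invoking Lemma~\ref{entirely red} to obtain $V(\HH)=P_1''\cup P_2''\cup P_3''\cup J$ with $|P_i''|=\frac{2n}{3}(1+o(1))$ and $|J|=O(nm^{-1/t})=o(n)$, together with the bounds $|\overrightarrow{P_{i+1}''P_i''}|_b\leq n^{3-1/t}$ (indices cyclic in $\{1,2,3\}$) established inside the proof of Lemma~\ref{structure}. For each $i$, call $v\in P_i''$ \emph{$i$-exceptional} if it participates in more than $n^{2-1/(2t)}$ blue triples $\{v,w,x\}$ with $w\in P_i''$ and $x\in P_{i+1}''$; a double count against $|\overrightarrow{P_{i+1}''P_i''}|_b$ bounds the number of such vertices by $O(n^{1-1/(2t)})$ per part. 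Let $\tilde P_i$ denote $P_i''$ with the exceptional vertices removed. A second double count shows that each $v\in\tilde P_i$ has at most $o(n)$ neighbors $w\in\tilde P_i$ for which the pair $\{v,w\}$ has more than $0.01|\tilde P_{i+1}|$ blue completions in $\tilde P_{i+1}$, so every such $v$ has degree at least $|\tilde P_i|-1-o(n)\geq 0.65n$ in $G(\tilde P_i,\tilde P_{i+1},0.99)$, comfortably above the target threshold $0.39n$.

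It then remains to redistribute the $o(n)$ leftover vertices from $J$ together with the exceptional sets into $P_1^\dagger,P_2^\dagger,P_3^\dagger$ so that $0.66\leq|P_i^\dagger|/n\leq 0.67$; the permitted $0.01n$ slack comfortably accommodates this, using Lemma~\ref{equal size} to confirm the sizes begin in range. For each leftover vertex $v$ I would pick the index $i$ that maximizes the number of red triples $vwx$ with $w\in\tilde P_i$ and $x\in\tilde P_{i+1}$; if that maximum is $\Omega(n^2)$, the same type of double-count as above shows that after inserting $v$ into part $i$ it has at least $0.39n$ neighbors in $G(P_i^\dagger,P_{i+1}^\dagger,0.98)$. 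Since the shift from density threshold $0.99$ to $0.98$ absorbs the lower-order change in $|P_{i+1}^\dagger|$ caused by adding or removing $o(n)$ vertices, the bulk degree estimate carries over to the final graph.

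The main obstacle will be ruling out the degenerate case where some leftover $v$ has this maximum being $o(n^2)$, meaning $v$ has few red (hence many blue) triples into every $\tilde P_i\times\tilde P_{i+1}$. I would handle it by contradiction: combining this abundance of blue wings with the abundance of blue triples of type $P_1''P_2''P_3''$ (the complement of the small quantity bounded inside the proof of Lemma~\ref{structure}) allows one to assemble a blue copy of $\FF$ containing $v$ as one of its vertices, contradicting the standing assumption that $\HH$ is $\FF$-free. Given the generous gap between the bulk minimum degree $0.65n$ and the required $0.39n$, once this extras-placement step is settled the remaining work is routine bookkeeping on sizes.
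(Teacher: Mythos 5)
Your overall strategy matches the paper's: clean each $P_i''$ of the few vertices seeing too many blue triples into $P_{i+1}''$ (using the directed bounds inherited from Lemma~\ref{claimbluetri} through Lemma~\ref{structure}), get large minimum degree in the bulk, and then reinsert the $o(n)$ leftovers one at a time, ruling out a vertex that is bad for all three cyclic pairs by assembling a blue Fano plane. The cleaning step and the size bookkeeping are fine.

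The gap is in the dichotomy you use for the reinsertion. You split on whether $\max_i \#\{\text{red } vwx : w\in\tilde P_i,\, x\in\tilde P_{i+1}\}$ is $\Omega(n^2)$ or $o(n^2)$, and in the first branch you claim a double count gives $v$ degree at least $0.39n$ in $G(P_i^\dagger,P_{i+1}^\dagger,0.98)$. That implication is false: the degree in $G(\cdot,\cdot,0.98)$ counts pairs $vw$ whose red completion density in $P_{i+1}^\dagger$ exceeds $0.98$, and a total red count of $cn^2$ --- even with $c$ close to the maximum possible $|P_i^\dagger||P_{i+1}^\dagger|/n^2\approx 0.42$ --- is compatible with every pair $vw$ having red density $0.97$ and hence with degree $0$. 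Indeed, a short computation shows one would need total red count above roughly $0.419n^2$ out of a possible $0.4225n^2$ before the count alone forces degree $0.39n$, so no trichotomy based on which index carries the most red triples can work. This leaves an uncovered middle regime: $v$ has $\Theta(n^2)$ red triples into each $\tilde P_i\times\tilde P_{i+1}$ (so your degenerate case does not apply) yet degree below $0.39n$ in all three graphs (so your first branch does not deliver the conclusion). The repair is to take as contradiction hypothesis exactly ``degree $<0.39n$ in all three graphs $G(P_i^\ast\cup\{v\},P_{i+1}^\ast,0.98)$,'' which directly yields, for each $i$, a set $X_i\subseteq P_i^\ast$ with $|X_i|\ge 0.26n$ such that every $x\in X_i$ has at least $0.01n$ blue completions $y\in P_{i+1}^\ast$; these structured blue wings, together with $|X_1X_2X_3|_r\le n^{3-1/t}$ and the directed red bounds $|\overrightarrow{P_i''P_{i+1}''}|_r\le n^{3-1/t}$, are precisely what the blue Fano plane $vx_1y_2$, $vx_2y_3$, $vx_3y_1$, $x_1x_2x_3$, $y_1x_2y_2$, $y_2x_3y_3$, $y_3x_1y_1$ requires. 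Relatedly, your sketch of the Fano assembly invokes only the transversal blue triples; the three edges of type $y_ix_{i+1}y_{i+1}$ also need the one-vertex-in-$P_i$, two-in-$P_{i+1}$ bounds, which you have available but must use.
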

\begin{proof}
Let $V(\HH)= P_1'' \cup P_2''\cup P_3''\cup J$ be the decomposition from Lemma~\ref{entirely red}. The number of vertices in $G(P_1'',P_2'',0.99)$ with degree less than $0.4n$ is at most $1500n^{1-1/t}$.  Removing at most $1500n^{1-1/t}$ vertices from each $P_1'',P_2'',P_3''$ leaves us with sets $P_1^\ast,P_2^\ast,P_3^\ast$ and a junk set $J^\ast$ such that every vertex in the graphs $G(P_1^\ast,P_2^\ast,0.98)$, $G(P_2^\ast,P_3^\ast,0.98)$ and $G(P_3^\ast,P_1^\ast,0.98)$ has minimum degree at least $0.39n$. 

We now check that every vertex $v$ in $J^\ast$ has degree at least $0.39n$ in one of the graphs $G(P_1^\ast \cup \{v\},P_2^\ast,0.98)$, $G(P_2^\ast \cup \{v\},P_3^\ast,0.98)$ and $G(P_3^\ast \cup \{v\},P_1^\ast,0.98).$ Assume this is not the case, then there exists $v\in J^\ast, X_1\subseteq P_1^\ast, X_2\subseteq P_2^\ast$ and $X_3 \subseteq P_3^\ast$ with $|X_1|=|X_2|=|X_3| \geq 0.65n-0.39n=0.26n$ such that for each $x_1 \in X_1$ there are at least $0.02 \cdot 0.65n\geq 0.01n$ many vertices $ y_2 \in P_2^\ast$ such that $vx_1y_2$ is blue. Similarly, for each $x_2 \in X_2$ there are at least $0.01n$ many vertices $ y_3 \in P_3^\ast$ such that $vx_2y_3$ is blue and for each $x_3 \in X_3$ there are at least $0.01n$ many vertices $ y_1 \in P_1^\ast$ such that $vx_3y_1$ is blue. Now pick $x_1\in X_1, x_2 \in X_2, x_3 \in X_3$ independently uniformly at random. There exist random sets $Y_1 \subset P_1^\ast,Y_2\subset P_2^\ast,Y_3\subset P_3^\ast$ with $|Y_1|=|Y_2|=|Y_3|\geq 0.01n$ such that $vx_1y_2,vx_2y_3,vx_3y_1$ for all $y_1\in Y_1, y_2\in Y_2 , y_3 \in Y_3.$ Now pick $y_1\in Y_1, y_2 \in Y_2, y_3 \in Y_3$ independently uniformly at random. The hyperedges $vx_1y_2,vx_2y_3,vx_3y_1$ are blue. As $|X_1X_2X_3|_r \leq |P_1''P_2''P_3''|_r \leq n^{3-1/t}$, $x_1x_2x_3$ is red with probability at most $4^3n^{-1/t}$. Since 
$$|Y_1X_2Y_2|_r\leq |\overrightarrow{P_1''P_2''}|_r \leq n^{3-1/t}, \quad |Y_2X_3Y_3|_r\leq |\overrightarrow{P_2''P_3''}|_r \leq n^{3-1/t}$$
 and  $|Y_3X_1Y_1|_r\leq |\overrightarrow{P_3''P_1''}|_r \leq n^{3-1/t},$
the probability that each of the hyperedges $y_1x_2y_2,$ $y_2x_3y_3,y_3x_1y_1$ is red is at most $C^\ast n^{-1/t}$ for an absolute constant $C^\ast$. Thus, with positive probability  $vx_1y_2,vx_2y_3,$ $vx_3y_1,x_1x_2x_3,y_1x_2y_2,y_2x_3y_3,y_3x_1y_1$ form a blue Fano plane. We therefore can assume that every vertex $v \in J^\ast$ has degree at least $0.39n$ in one of the graphs $G(P_1^\ast \cup \{v\},P_2^\ast,0.98)$, $G(P_2^\ast \cup \{v\},P_3^\ast,0.98)$ and $G(P_3^\ast \cup \{v\},P_1^\ast,0.98).$ Thus every vertex from $J$ can be added to $P_1^\ast$ or $P_2^\ast$ or $P_3^\ast$ such that one obtains blocks $P_1^\dagger,P_2^\dagger,P_3^\dagger$ ($ 0.66 \leq |P_i^\dagger| \leq 0.67$ for $i=1,2,3$) with $P_1^\dagger \cup P_2^\dagger \cup P_3^\dagger=[2n-1]$ in such a way that afterwards all graphs $G(P_1^\dagger,P_2^\dagger,0.98)$, $G(P_2^\dagger,P_3^\dagger,0.98)$ and $G(P_3^\dagger,P_1^\dagger,0.98)$ have minimum degree at least $0.39n$. 
\end{proof}

\begin{lemma}\label{structure3}
In the setting of Lemma~\ref{structure2} we can find a red $\Ti$. 
\end{lemma}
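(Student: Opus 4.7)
The plan is to build a red $\Ti$ walking almost entirely inside the largest block and using carefully placed spikes into its cyclic successor. Since the hypotheses of Lemma~\ref{structure2} are invariant under the cyclic shift $(P_1^\dagger,P_2^\dagger,P_3^\dagger)\to(P_2^\dagger,P_3^\dagger,P_1^\dagger)$, I may relabel so that $p:=|P_1^\dagger|$ is the largest; by pigeonhole $p\geq \lceil(2n-1)/3\rceil$, while Lemma~\ref{structure2} gives $p\leq 0.67n$ together with $\delta(G)\geq 0.39n$, where $G:=G(P_1^\dagger,P_2^\dagger,0.98)$.

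Because $0.39n>p/2$, Dirac's theorem yields a Hamiltonian path $a_1 a_2 \cdots a_p$ in $G$. I will extend it into a red tight path of the form
\[
 b_0,\,a_1,\,a_2,\,b_1,\,a_3,\,a_4,\,b_2,\,a_5,\,a_6,\,b_3,\ldots,
\]
ending with $b_{p/2}$ if $p$ is even or with $a_p$ if $p$ is odd, where the $b_i\in P_2^\dagger$ are distinct and chosen below. Every three consecutive vertices form a triple of the shape $(a_j,a_{j'},b_i)$ (in some order), whose pair $\{a_j,a_{j'}\}$ is a Hamiltonian-path edge of $G$ and hence has at least $0.98|P_2^\dagger|$ common red neighbors in $P_2^\dagger$. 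Each $b_i$ lies in at most three such triples, so its feasible set has size at least $(1-3\cdot 0.02)|P_2^\dagger|=0.94|P_2^\dagger|$.

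I pick $b_0,b_1,\ldots$ greedily, each distinct from the previous choices. The total number of $b_i$'s is at most $\lfloor p/2\rfloor+1\leq 0.34n+1$, and at every step the feasible pool has size at least $0.94\cdot 0.66n-0.34n-1>0$, so the greedy procedure succeeds. The resulting red tight path has length $\lfloor 3p/2\rfloor+1$; a direct check on $n\bmod 3$ shows that $p\geq \lceil(2n-1)/3\rceil$ implies $\lfloor 3p/2\rfloor+1\geq n$ in all three residue classes. This red $\Ti$ in $\HH$ contradicts our standing assumption and finishes the proof of Theorem~\ref{mainthm}.

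The main obstacle is the tight length bookkeeping: the naive pattern $a_1 a_2 b_1 a_3 a_4 b_2\cdots$ (without the leading $b_0$) produces a path of length only $\lfloor 3p/2\rfloor$, which is exactly one vertex short in the balanced case $|P_1^\dagger|=|P_2^\dagger|=|P_3^\dagger|=(2n-1)/3$ arising when $n\equiv 2\pmod 3$. Prepending $b_0$ (and, when $p$ is odd, the trailing $a_p$) is precisely the modification that recovers the missing vertex; everything else is a clean combination of Dirac's theorem with a greedy selection made possible by the $0.98$ density in $G(P_1^\dagger,P_2^\dagger,0.98)$.
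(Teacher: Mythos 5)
Your proof is correct and follows essentially the same route as the paper: take a Hamiltonian path in $G(P_1^\dagger,P_2^\dagger,0.98)$ guaranteed by the minimum degree condition, then greedily insert a vertex of $P_2^\dagger$ after every second vertex, each insertion being feasible because the three relevant Hamiltonian-path edges each have at least $0.98|P_2^\dagger|$ common red co-neighbors. Your extra care with the length count (prepending $b_0$ to cover the case $n\equiv 2\pmod 3$, where the largest block may have only $(2n-1)/3<2n/3$ vertices) is a legitimate tightening of the paper's slightly loose ``$2n/3+n/3=n$'' bookkeeping, but it does not change the argument.
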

\begin{proof}
Since $P_1^\dagger \cup P_2^\dagger \cup P_3^\dagger =[2n-1]$ one of the blocks has size at least $2n/3$. W.l.o.g.\ $|P_1^\dagger|\geq 2n/3$. The minimum degree of $G(P_1^\dagger,P_2^\dagger,0.98)$ assures that this graph contains a Hamiltonian path. Label such a path $a_1,a_2,a_3,\ldots,a_{|P_1^\dagger|}$. In order to find a tight path of length $n$, we will add after every second vertex in this path a vertex from $P_2^\dagger$. Assume we already have found $a_1,a_2,b_1,a_3,a_4,b_2,\ldots,a_{2i-1}a_{2i}$ then we can choose $b_i$ from $P_2^\dagger$ which has not been used so far such that $a_{2i-1}a_{2i}b_i$, $a_{2i}b_ia_{2i+1}$ and $b_ia_{2i+1},a_{2i+2}$ is red, because $|P_2^\dagger|-0.02|P_2^\dagger|-0.02|P_2^\dagger|-0.02|P_2^\dagger|- i >0$ for $i<0.94|P_2^\dagger|$ and thus especially for $i<0.5n$. Hence, we can embed a red tight path of length at least $2n/3 + n/3=n$.   
\end{proof}


\begin{thebibliography}{9}

\bibitem{Erdos}
J. A. Bondy, P. Erd\H{o}s, \emph{Ramsey numbers for cycles in graphs}, J. Combinatorial Theory Ser. B \textbf{14} (1973), 46--54. 

\bibitem{ConFoxSud}
 D. Conlon, J. Fox, B. Sudakov, \emph{Hypergraph Ramsey numbers}, J. Amer. Math. Soc. \textbf{23} (2010), no. 1, 247--266. 
 
 \bibitem{ramseysurvey} D. Conlon, J. Fox,  B. Sudakov, \emph{Recent developments in graph Ramsey theory}, Surveys in combinatorics \textbf{424} (2015): 49-118.
 

\bibitem{Pontiveros}
S. Griffiths, R. Morris, G. Fiz Pontiveros, D. Saxton, J. Skokan, \emph{On the Ramsey number of the triangle and the cube}, Combinatorica \textbf{36} (2016), no. 1, 71--89. 


\bibitem{Irving}
R. W. Irving, \emph{A bipartite Ramsey problem and the Zarankiewicz numbers}, Glasgow Math. J. \textbf{19} (1978), no. 1, 13--26. 

\bibitem{Keevash}
P. Keevash, E. Long, J. Skokan, \emph{Cycle-complete Ramsey numbers}, arXiv preprint, arXiv:1807.06376 (2018).

\bibitem{Sos}
T. K{\H o}v{\'a}ri, V. T. S{\'o}s, P. Tur{\'a}n, \emph{On a problem of K. Zarankiewicz}, Colloquium Math. \textbf{3}, (1954). 50--57.

\bibitem{Ramsey}
F. P. Ramsey, \emph{On a Problem of Formal Logic}, Proc. London Math. Soc. (2) \textbf{30} (1929), no. 4, 264--286. 


\bibitem{conlonques} Y. Zhao, \emph{AIM workshop on Graph Ramsey Theory Problem Session}, \url{https://www.overleaf.com/read/mnvcscjjysvg}.


\end{thebibliography}
\end{document}